\newtheorem{theorem}{Theorem}[section]
\newtheorem{lemma}[theorem]{Lemma}
\theoremstyle{definition}
\theoremstyle{remark}
\numberwithin{equation}{section}
\newenvironment{proof3.2}{\medskip\noindent{\bf Proof of Theorem 3.2:}\enspace}{\hfill \qed \newline \medskip}
\newenvironment{proof3.3}{\medskip\noindent{\bf Proof of Theorem 3.3:}\enspace}{\hfill \qed \newline \medskip}
\newenvironment{proof4.2}{\medskip\noindent{\bf Proof of Theorem 4.2:}\enspace}{\hfill \qed \newline \medskip}
\begin{document}
\title[Asymptotic stability for a class of viscoelastic quations]
{Asymptotic stability for a class of viscoelastic equations
with general relaxation functions and the time delay}
\author[M. Liao, Z. Tan ]
{Menglan Liao, Zhong Tan$^*$}

\begin{comment}
\address{Qingwei Li\newline
College of Science,
Dalian Maritime University,
Dalian, Liaoning Province 116026, China}
\email{lqw1022@dlmu.edu.cn}
\end{comment}

\address{Menglan Liao \newline  
School of Mathematical Sciences, Xiamen University, Xiamen, Fujian, 361005, People's Republic of China }
\email{liaoml14@mails.jlu.edu.cn}
\address{Zhong Tan \newline  
School of Mathematical Sciences, Xiamen University, Xiamen, Fujian, 361005, People's Republic of China }
\email{tan85@xmu.edu.cn}

\begin{comment}
\address{Xiangyu Zhu \newline
School of Mathematics,
Jilin University,
Changchun, Jilin Province 130012, People's Republic of China 
\&
Department of Mathematics,
College of Science,
Yanbian University,
Yanji, Jilin Province  133002, People's Republic of China }
\email{zxyzhu2009@163.com.cn}
\end{comment}

\subjclass[2010]{35B40, 26A51, 93D20.}
\keywords{Viscoelasticity; delay term; source term; energy decay.}
%\hfill\break\indent Hopf's lemma; convex domain.}
\thanks{Supported by the National Natural Science Foundation of China (Grant Nos. 11926316, 11531010, 12071391)}
\thanks{$^*$ Corresponding author: Zhong Tan}

\begin{abstract}
The goal of the present paper is to study the viscoelastic wave equation with the time delay
\[ |u_t|^\rho u_{tt}-\Delta u-\Delta u_{tt}+\int_0^tg(t-s)\Delta u(s)ds+\mu_1u_t(x,t)+\mu_2 u_t(x,t-\tau)=b|u|^{p-2}u\]
under initial boundary value conditions, where $\rho,~b,~\mu_1$ are positive constants, $\mu_2$ is a real number, $\tau>0$ represents the time delay. By using the multiplier method together with some properties of the convex functions, the explicit and general stability results of energy are proved under the  general assumption on the relaxation function $g$. This work generalizes and improves earlier results on the stability of the viscoelastic equations with the time delay in the literature.
\end{abstract}

\maketitle

\section{Introduction}
This paper concerns the following initial boundary value problem with the time delay
\begin{equation}
\label{1111}
\begin{cases}
      |u_t|^\rho u_{tt}-\Delta u-\Delta u_{tt}+\int_0^tg(t-s)\Delta u(s)ds\\
      \quad+\mu_1u_t(x,t)+\mu_2 u_t(x,t-\tau)=b|u|^{p-2}u& (x,t)\in\Omega \times(0,\infty),  \\
      u_t(x,t-\tau)=f_0(x,t-\tau)&(x,t)\in\Omega\times(0,\tau),  \\
      u(x,0)=u_0(x),~u_t(x,0)=u_1(x)&x\in\Omega,\\
      u(x,t)=0&(x,t)\in\partial\Omega\times[0,\infty),
\end{cases}
\end{equation}
where $\Omega\subset \mathbb{R}^N(N\ge 1)$ is a bounded domain with a smooth boundary $\partial\Omega$, the unknown $u := u(x, t)$ is a real valued function defined on $\Omega \times(0,\infty)$, $\rho,~b,~\mu_1$ are positive constants, $\mu_2$ is a real number, $\tau>0$ represents the time delay, $g$ is  the kernel of the memory term, and the initial data $(u_0,u_1,f_0)$ are given functions belonging to suitable spaces. In addition, the following assumptions are given throughout this paper: 

$(\mathbf{H_1})$  The relaxation function $g:[0,\infty)\to (0,\infty)$ is a differentiable function satisfying 
\begin{equation}
\label{02}
1-\int_0^\infty g(s)ds=l>0,
\end{equation}
and there exists a $C^1$ function $G:(0,\infty)\to (0,\infty)$ which is strictly increasing and strictly convex $C^2$ function on $(0,r]$, $r\le g(0)$, with $G(0)=G'(0)=0$ such that 
\begin{equation}
\label{add02}
g'(t)\le -\zeta(t) G(g(t)) \quad\text{for }t\ge 0,
\end{equation}
here $\zeta(t)$ is a positive non-increasing differentiable function.

$(\mathbf{H_2})$ $\rho$ and $p$ satisfy 
\[0<\rho\le \frac{2}{N-2}\text{ for }N\ge 3\text{ and }\rho>0\text{ for }N=1,2;\]
\[2<p\le \frac{2(N-1)}{N-2}\text{ for }N\ge 3\text{ and }p>2\text{ for }N=1,2.\]

It is well known that time delay effects which often appear in many practical applications may induce some instabilities. Some results on the local existence and blow-up of solutions to a class of equations with delay have been obtained, the interested readers can refer to \cite{CB2020,KM2016,KM2018,K2018,W2019} and the reference therein. Nicaise and  Pignotti \cite{NP2006} considered the wave equation with
a delay term in the boundary condition as well as the wave equation with a delayed velocity term and mixed Dirichlet-Neumann boundary condition in a bounded and smooth domain, respectively. By introducing suitable
energies and by using some observability inequalities, they proved an exponential stability of the solution in both cases under suitable assumptions. Kirane and Said-Houari \cite{KS2011} studied  the following viscoelastic wave equation with a delay term in internal feedback
\begin{equation}
\label{51}
u_{tt}-\Delta u+\int_0^tg(t-s)\Delta u(s)ds+\mu_1u_t(x,t)+\mu_2 u_t(x,t-\tau)=0
\end{equation}
here  $\mu_1,~\mu_2$ are positive constants, under the initial boundary conditions of problem \eqref{1111}. They proved the existence of a unique weak solution for $\mu_2\le \mu_1$ by using the Faedo-Galerkin approximations together with some energy estimates. Provided that  $g:\mathbb{R}^+\to \mathbb{R}^+$ is a $C^1$ function satisfying $g(0)>0$ and \eqref{02}, and there exists a positive non-increasing differentiable function $\zeta(t)$ such that
\begin{equation}
\label{52}
g'(t)\le-\zeta(t)g(t)\text{ for }t\ge 0\text{ and }\int_0^{+\infty} \zeta(t)dt=+\infty,
\end{equation}
by establishing suitable Lyapunov functionals, they also obtained the corresponding  exponential stability for $\mu_2<\mu_1$  and for $\mu_2=\mu_1$, respectively. Subsequently, Dai and Yang \cite{DY2014} proved an existence result of problem \eqref{51} without restrictions of $\mu_1,~\mu_2>0$ and $\mu_2\le \mu_1$. Making use of the viscoelasticity term controls the delay term, they also proved an energy decay result for problem \eqref{51} in the case $\mu_1=0$ provided that $g:\mathbb{R}^+\to \mathbb{R}^+$ is a $C^1$ function satisfying $g(0)>0$ and \eqref{02}, and there exists a positive constant $\zeta$ such that
\begin{equation}
\label{53}
g(t) \le -\zeta g(t)\text{ for }t>0.
\end{equation}
Liu \cite{L2013} generalized the results obtained by Kirane and Said-Houari \cite{KS2011}. That is, by the similar method in \cite{KS2011}, they established a general energy decay result for problem \eqref{51} with $\tau(t)$ instead of $\tau$.  In the absence of the source term $b|u|^{p-2}u$ in problem \eqref{1111}, Wu \cite{W2013} proved an energy decay by the similar method  in \cite{KS2011}, and generalized the results to the time-varying delay in \cite{W2016}. There are many papers concerning with the stability of  viscoelastic equations with time delay, the interested readers may refer to \cite{BSG2021,F2017,MK2016} and the reference therein. However, the  relaxation function $g$ are mainly limited to satisfying among the three conditions, which are \eqref{52}, \eqref{53} and that  $g:\mathbb{R}^+\to \mathbb{R}^+$ is a differentiable function satisfying $g(0)>0$ and \eqref{02}, and there exists a positive function $G\in C^1(\mathbb{R}^+)$ and $G$ is linear or strictly increasing and strictly convex $C^2$ function on $(0, r]$, $r< 1$, with $G(0) = G'(0) = 0$, such that
\[g'(t)\le -G(g(t))\text{ for }t>0.\]
Until recently,  Chellaoua and Boukhatem \cite{0CB2021} generalized the previous conditions that the relaxation function $g$ satisfied,  specifically investigated the following  second-order abstract viscoelastic equation in Hilbert spaces
\begin{equation*}
\begin{cases}
     u_{tt}+Au-\int_0^\infty g(s)Bu(t-s)ds+\mu_1u_t(t)+\mu_2 u_t(t-\tau)=0& t>0,  \\
      u_t(t-\tau)=f_0(t-\tau)&t\in(0,\tau),  \\
      u(-t)=u_0(t),~u_t(0)=u_1&t\ge 0,
\end{cases}
\end{equation*}
where  $A: D(A) \to H$ and $B : D(B) \to H$ are a self-adjoint linear positive operator with domains $D(A) \subset D(B)\subset H$ such that the embeddings are dense and compact. They established an explicit and general decay results of the energy solution by introducing a suitable Lyapunov functional and some properties of the convex functions under the condition $(\mathbf{H_1})$. Chellaoua and Boukhatem also addressed the stability results for the following second-order abstract viscoelastic equation in Hilbert spaces with
time-varying delay in \cite{CB2021}
\begin{equation*}
\begin{cases}
     u_{tt}+Au-\int_0^t g(t-s)Bu(s)ds+\mu_1u_t(t)+\mu_2 u_t(t-\tau(t))=0& t>0,  \\
      u_t(t-\tau(0))=f_0(t-\tau(0))&t\in(0,\tau(0)),  \\
      u(0)=u_0,~u_t(0)=u_1&t\ge 0,
\end{cases}
\end{equation*}
under the condition $(\mathbf{H_1})$. It is worth pointing out that Mustafa \cite{M2018} first proposed the condition $(\mathbf{H_1})$ to study the decay rates for the following initial boundary value problem 
\begin{equation*}
\begin{cases}
     u_{tt}-\Delta u+\int_0^t g(t-s)\Delta u(s)ds=0& \text{in }\Omega\times (0,\infty),  \\
      u=0&\text{on }\partial\Omega\times (0,\infty),  \\
      u(x,0)=u_0(x),~u_t(x,0)=u_1(x)&x\in\Omega.
\end{cases}
\end{equation*}
After that, many authors popularized the method used by Mustafa in \cite{M2018}. The readers may see the references \cite{BAM2020,FL2020,HM2019,0M2018,M2021} to get more details.

Motivated by the above works, we are committed to considering the stability of problem \eqref{1111} when the relaxation function $g$ satisfies the condition $(\mathbf{H_1})$. To the best of our knowledge,  there is no decay result for problem \eqref{1111} where the relaxation functions satisfy $(\mathbf{H_1})$, although Wu \cite{W2019} has investigated problem \eqref{1111} and proved the blow-up result with nonpositive and positive initial energy. With minimal conditions on the relaxation function $g$, we establish a general and optimal energy decay rates of problem \eqref{1111} in Theorem \ref{thm3.3}. Our proof is based on the multiplier method and the similar arguments in \cite{0CB2021,M2018} but it is different from before since the presence of $\Delta u_{tt}$, the time delay and the force source 
$b|u|^{p-2}u$. The outline of this paper is as follows: In Section 2, we give some preliminary results. Section 3 is used to present the energy decay and its proof. In Section 4, we give the possible generalizations.

\section{Preliminaries}
Throughout this paper, we denote by $\|\cdot\|_p$  and $\|\nabla \cdot\|_2$ the norm on $L^p(\Omega)$ with $1\le p\le \infty$ and $H_0^1(\Omega)$, respectively. Let $\lambda_1$ be the first eigenvalue of  the following boundary value problem 
\begin{equation*}
\begin{cases}
-\Delta \psi=\lambda\psi&\quad \text{for }x\in \Omega,\\
\psi=0&\quad\text{for }x\in \partial\Omega.
\end{cases}
\end{equation*} 
The symbol $c_s$ is the optimal embedding constant of $H_0^1(\Omega)\hookrightarrow L^p(\Omega)$.

In order to the completeness of results, in what follows, we state the known  results in \cite{W2019}.
Let us introduce the new variable 
\[z(x,\kappa,t)=u_t(x,t-\tau\kappa)\quad \text{ for }x\in\Omega,~\kappa\in (0,1),\]
then problem \eqref{1111} is equivalent to 
\begin{equation}
\label{2.1}
\begin{cases}
      |u_t|^\rho u_{tt}-\Delta u-\Delta u_{tt}+\int_0^tg(t-s)\Delta u(s)ds\\
      \quad+\mu_1u_t(x,t)+\mu_2 z(x,1,t)=b|u|^{p-2}u& (x,t)\in\Omega \times(0,\infty),  \\
      \tau z_t(x,\kappa,t)+z_\kappa(x,\kappa,t)=0&(x,t)\in\Omega\times(0,\infty),~\kappa\in (0,1),\\
      z(x,0,t)=u_t(x,t)&(x,t)\in\Omega\times(0,\infty),\\
      z(x,\kappa,0)=f_0(x,-\tau\kappa)&x\in\Omega,\\
      u_t(x,t-\tau)=f_0(x,t-\tau)&(x,t)\in\Omega\times(0,\tau),  \\
      u(x,0)=u_0(x),~u_t(x,0)=u_1(x)&x\in\Omega,\\
      u(x,t)=0&(x,t)\in\partial\Omega\times[0,\infty).
\end{cases}
\end{equation}

\begin{theorem}[Theorem 2.3 in \cite{W2019}]
Suppose that $|\mu_2|\le \mu_1$,  $(\mathbf{H_1})$ and $(\mathbf{H_2})$ hold. Assume that $u_0,~u_1\in H_0^1(\Omega)$ and $f_0\in L^2(\Omega\times (0,1))$, then there exists a unique solution $(u,z)$ of problem \eqref{2.1} satisfying 
\[u,u_t\in C([0,T);H_0^1(\Omega)),\quad z\in C([0,T);L^2(\Omega\times(0,1))), \]
for $T>0.$
\end{theorem}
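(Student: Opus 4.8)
The plan is to prove the well-posedness of problem \eqref{2.1} by the Faedo--Galerkin method, exactly in the spirit of \cite{KS2011,DY2014,W2019}, so I will only outline the architecture. First I would fix a basis $\{w_j\}_{j\ge1}$ of $H_0^1(\Omega)$ consisting of eigenfunctions of $-\Delta$, and for the delayed component introduce, following \cite{NP2006,KS2011}, a basis $\{\phi_j\}$ for the transport part so that $z^n(x,\kappa,t)=\sum_{j} h_{j}^{n}(t)\phi_j(\kappa) w_j(x)$ can satisfy the compatibility relation $z^n(x,0,t)=u_t^n(x,t)$. Projecting \eqref{2.1} onto $V_n=\mathrm{span}\{w_1,\dots,w_n\}$ yields a system of ODEs for the coefficients; because of the term $|u_t|^\rho u_{tt}$ the leading coefficient matrix is $\big(\int_\Omega(|u_t^n|^\rho+1)w_iw_j+\nabla w_i\cdot\nabla w_j\big)$, which is symmetric positive definite and depends continuously on $(u^n,u_t^n)$, hence invertible; together with the transport ODEs this gives, by the Cauchy--Peano / Carathéodory theorem, a local solution $(u^n,z^n)$ on some $[0,t_n)$.

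Next I would derive the a priori estimates that let us pass $t_n\to T$ and extract limits. Multiplying the $u^n$-equation by $u_t^n$ and integrating over $\Omega$, multiplying the $z^n$-equation by $\frac{|\mu_2|}{2\tau}z^n$ (or the appropriate weight) and integrating over $\Omega\times(0,1)$, then adding, produces the standard modified energy
\[
E^n(t)=\frac{1}{\rho+2}\|u_t^n\|_{\rho+2}^{\rho+2}+\frac12\|u_t^n\|_2^2+\frac12\Big(1-\int_0^tg(s)ds\Big)\|\nabla u^n\|_2^2+\frac12\big(g\circ\nabla u^n\big)(t)+\frac12\|\nabla u_t^n\|_2^2-\frac{b}{p}\|u^n\|_p^p+\xi\int_\Omega\int_0^1|z^n(x,\kappa,t)|^2\,d\kappa\,dx,
\]
whose derivative, using $|\mu_2|\le\mu_1$ and Young's inequality in the delay cross-term exactly as in \cite{KS2011,W2019}, is bounded by $\tfrac12(g'\circ\nabla u^n)(t)-(\mu_1-|\mu_2|)\|u_t^n\|_2^2\le 0$ modulo the source term. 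The source term $-\frac bp\|u^n\|_p^p$ is handled by $(\mathbf{H_2})$ via the Sobolev embedding $H_0^1\hookrightarrow L^p$ and a Gronwall argument, giving a uniform bound for $(u^n,u_t^n)$ in $C([0,T];H_0^1(\Omega))$ and for $z^n$ in $C([0,T];L^2(\Omega\times(0,1)))$ on any fixed $T>0$; in particular $t_n$ can be taken equal to $T$.

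Then I would pass to the limit: the uniform bounds give, up to a subsequence, $u^n\rightharpoonup u$ weak-$*$ in $L^\infty(0,T;H_0^1)$, $u_t^n\rightharpoonup u_t$ weak-$*$ in $L^\infty(0,T;H_0^1)$, and $z^n\rightharpoonup z$ weak-$*$ in $L^\infty(0,T;L^2(\Omega\times(0,1)))$; the compact embedding $H_0^1(\Omega)\hookrightarrow\hookrightarrow L^2(\Omega)$ together with the Aubin--Lions lemma upgrades this to strong convergence of $u^n$ in $C([0,T];L^2)$ and a.e. convergence, which is what is needed to identify the nonlinear terms $|u_t^n|^\rho u_t^n$ and $|u^n|^{p-2}u^n$ (using $(\mathbf{H_2})$ so that these sit in reflexive dual spaces and are weakly continuous along the a.e.-convergent sequence, by the Lions lemma on weak convergence of bounded sequences). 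The memory term passes to the limit by dominated convergence in the convolution. This yields a weak solution; the regularity $u,u_t\in C([0,T);H_0^1)$ follows from the energy identity plus a standard regularization/Lions--Magenes argument, and $z\in C([0,T);L^2)$ from the transport equation. Uniqueness is obtained by the energy method on the difference of two solutions, again exploiting $|\mu_2|\le\mu_1$ to absorb the delay contribution and $(\mathbf{H_2})$ to control the Lipschitz-type estimates on the two nonlinearities, concluding via Gronwall. The main obstacle, and the only place where real care is required, is the simultaneous presence of $|u_t|^\rho u_{tt}$ and $-\Delta u_{tt}$: the first forces one to work with the $L^{\rho+2}$-in-time-derivative energy and to check invertibility of the nonlinear mass matrix, while the second means the ``mass'' operator is $I-\Delta$ rather than $I$, so the Galerkin estimates and the passage to the limit for $u_{tt}$-type terms must be organized so that no second time derivative is ever needed in a strong norm; everything else is routine bookkeeping following \cite{KS2011,DY2014,W2019}.
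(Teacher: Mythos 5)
First, a point of reference: the paper does not prove this statement at all --- it is imported verbatim as Theorem 2.3 of \cite{W2019}, so there is no in-paper proof to compare against. Your Faedo--Galerkin architecture is the standard route that the cited reference (and \cite{KS2011,DY2014}) follows, and most of the outline is sound: the reformulation of the delay as the transport variable $z$, the invertibility of the nonlinear mass matrix coming from $|u_t|^\rho u_{tt}-\Delta u_{tt}$, and the use of $|\mu_2|\le\mu_1$ with the weighted $z$-term to make the delay contribution dissipative are all correct ingredients.

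There are, however, two genuine gaps. The first is the claim that Gronwall gives a uniform bound for $(u^n,u_t^n)$ ``on any fixed $T>0$.'' With the source $b|u|^{p-2}u$, $p>2$, the energy controls $\|u^n\|_p^p$ only through $\|\nabla u^n\|_2^p$, so the differential inequality for the Galerkin energy is of the form $\frac{d}{dt}\mathcal{E}^n\le C\,(\mathcal{E}^n)^{p/2}$ rather than linear; this yields bounds only on a time interval determined by the data, not on arbitrary $[0,T]$. Indeed the very paper you are working in cites \cite{W2019} for finite-time blow-up of this problem, so a proof of global-in-time uniform bounds cannot be correct; the theorem is a local existence statement and the a priori estimate step must be local (or one must run a truncation/fixed-point scheme). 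The second gap is in the identification of the limits of the nonlinear terms: almost-everywhere convergence of $u^n$ (from Aubin--Lions applied to $u^n$) identifies $|u^n|^{p-2}u^n$, but it does not identify $|u_t^n|^\rho u_t^n$, which requires strong (or a.e.) convergence of $u_t^n$ itself. That in turn needs a separate compactness step --- typically a second energy estimate bounding $u_{tt}^n$ in a dual space so that Aubin--Lions can be applied to $u_t^n$ --- and this is precisely where the interaction between $|u_t|^\rho u_{tt}$ and $-\Delta u_{tt}$ is delicate; you name this difficulty at the end but the body of your argument does not actually address it. Minor slips (the spurious $+1$ in the mass matrix and the spurious $\frac12\|u_t^n\|_2^2$ in the energy, neither of which appears in the actual equation or in \eqref{2.2}) are harmless but worth cleaning up.
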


Define the energy functional of problem \eqref{2.1} as follows
\begin{equation}
\label{2.2}
\begin{split}
E(t)&=\frac{1}{\rho+2}\|u_t\|_{\rho+2}^{\rho+2}+\frac12\Big(1-\int_0^tg(s)ds\Big)\|\nabla u\|_2^2+\frac12(g\circ\nabla u)(t)\\
&\quad+\frac12\|\nabla u_t\|_2^2+\frac{\xi}{2}\int_\Omega\int_0^1z^2(x,\kappa,t)d\kappa dx-\frac bp\|u\|_p^p,
\end{split}
\end{equation}
here $\xi$ is a positive constant so that 
\begin{equation}
\label{2.3}
\tau|\mu_2|\le\xi\le\tau(2\mu_1-|\mu_2|)
\end{equation}
and
\[(g\circ u)(t)=\int_0^tg(t-s)\|u(s)-u(t)\|_2^2ds.\]

\begin{lemma}[Lemma 3.1 in \cite{W2019}]\label{lem2.1}
$E(t)$ is a non-increasing function and 
\begin{equation}
\label{01}
\begin{split}
E'(t)&\le -\omega(\|u_t\|_2^2+\|z(x,1,t)\|_2^2)+\frac 12(g'\circ \nabla u)(t)-\frac 12 g(t)\|\nabla u\|_2^2\\
&\le -\omega(\|u_t\|_2^2+\|z(x,1,t)\|_2^2)\le 0\quad\text{ for all }t\ge 0,
\end{split}
\end{equation}
where $\omega=\min\Big\{\mu_1-\frac{\xi}{2\tau}-\frac{|\mu_2|}{2},\frac{\xi}{2\tau}-\frac{|\mu_2|}{2}\Big\}\ge 0.$
\end{lemma}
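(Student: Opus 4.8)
The plan is to differentiate $E(t)$ along the solution $(u,z)$ of \eqref{2.1} and to verify that every term which is not a total time derivative has a sign. First I would multiply the first equation of \eqref{2.1} by $u_t$ and integrate over $\Omega$. Using $u_t=0$ on $\partial\Omega$ and integration by parts, the terms $|u_t|^\rho u_{tt}$, $-\Delta u$, $-\Delta u_{tt}$ and $b|u|^{p-2}u$ give respectively the total derivatives $\frac{d}{dt}\left(\frac{1}{\rho+2}\|u_t\|_{\rho+2}^{\rho+2}\right)$, $\frac{d}{dt}\left(\frac12\|\nabla u\|_2^2\right)$, $\frac{d}{dt}\left(\frac12\|\nabla u_t\|_2^2\right)$ and $\frac{d}{dt}\left(\frac bp\|u\|_p^p\right)$, while the two feedback terms produce $\mu_1\|u_t\|_2^2+\mu_2\int_\Omega z(x,1,t)u_t\,dx$.

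The one computation needing care is the memory term. I would split $\nabla u(s)=(\nabla u(s)-\nabla u(t))+\nabla u(t)$ inside the convolution and integrate by parts in $x$ to reach the classical identity
\[
\int_\Omega\left(\int_0^t g(t-s)\Delta u(s)\,ds\right)u_t\,dx=\frac12\frac{d}{dt}(g\circ\nabla u)(t)-\frac12(g'\circ\nabla u)(t)-\frac12\left(\int_0^t g(s)\,ds\right)\frac{d}{dt}\|\nabla u\|_2^2 .
\]
Combining this with the derivative $\frac{d}{dt}\left(\frac12\|\nabla u\|_2^2\right)$ coming from $-\Delta u$ and regrouping, the $\|\nabla u\|_2^2$–terms assemble into $\frac{d}{dt}\left[\frac12\left(1-\int_0^t g(s)\,ds\right)\|\nabla u\|_2^2\right]+\frac12 g(t)\|\nabla u\|_2^2$, and one is left with
\[
\frac{d}{dt}\left[E(t)-\frac{\xi}{2}\int_\Omega\int_0^1 z^2\,d\kappa\,dx\right]=\frac12(g'\circ\nabla u)(t)-\frac12 g(t)\|\nabla u\|_2^2-\mu_1\|u_t\|_2^2-\mu_2\int_\Omega z(x,1,t)u_t\,dx .
\]

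Next I would differentiate the remaining (delay) term of $E$: using the transport equation $\tau z_t+z_\kappa=0$ and the compatibility relation $z(x,0,t)=u_t(x,t)$,
\[
\frac{d}{dt}\left(\frac{\xi}{2}\int_\Omega\int_0^1 z^2\,d\kappa\,dx\right)=-\frac{\xi}{2\tau}\int_\Omega\int_0^1\partial_\kappa(z^2)\,d\kappa\,dx=\frac{\xi}{2\tau}\|u_t\|_2^2-\frac{\xi}{2\tau}\|z(x,1,t)\|_2^2 .
\]
Adding the two identities and estimating $-\mu_2\int_\Omega z(x,1,t)u_t\,dx\le\frac{|\mu_2|}{2}\|u_t\|_2^2+\frac{|\mu_2|}{2}\|z(x,1,t)\|_2^2$ by Young's inequality gives
\[
E'(t)\le-\left(\mu_1-\frac{\xi}{2\tau}-\frac{|\mu_2|}{2}\right)\|u_t\|_2^2-\left(\frac{\xi}{2\tau}-\frac{|\mu_2|}{2}\right)\|z(x,1,t)\|_2^2+\frac12(g'\circ\nabla u)(t)-\frac12 g(t)\|\nabla u\|_2^2 .
\]
By the choice \eqref{2.3} of $\xi$ both parenthesized coefficients are nonnegative, which is the first inequality in \eqref{01} with $\omega\ge0$ as stated; and since $(\mathbf{H_1})$ forces $g>0$ and $g'(t)\le-\zeta(t)G(g(t))\le0$, both $(g'\circ\nabla u)(t)$ and $-\frac12 g(t)\|\nabla u\|_2^2$ are $\le0$, so dropping them yields $E'(t)\le-\omega(\|u_t\|_2^2+\|z(x,1,t)\|_2^2)\le0$, i.e. $E$ is non-increasing.

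The algebra above is routine once the convolution identity is in hand; the genuine point is its rigorous justification, since the regularity $u,u_t\in C([0,T);H_0^1(\Omega))$, $z\in C([0,T);L^2(\Omega\times(0,1)))$ given by the existence theorem above is not by itself enough to differentiate $E$ pointwise in $t$ or to integrate by parts as freely as above. I would therefore carry out all these steps on the Faedo--Galerkin approximations, derive the differential inequality there, integrate it in $t$, and pass to the limit, so that \eqref{01} holds first in integrated form and then, by the regularity this gives to $t\mapsto E(t)$, pointwise almost everywhere.
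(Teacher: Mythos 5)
Your proof is correct and is essentially the standard argument: the paper does not reprove this lemma but quotes it from Wu's paper \cite{W2019}, where it is established by exactly this computation (multiply by $u_t$, use the classical convolution identity for the memory term, differentiate the delay functional via the transport equation, and apply Young's inequality together with the constraint \eqref{2.3} on $\xi$). Your closing remark about justifying the formal differentiation on the Faedo--Galerkin approximations is a sensible extra precaution, not a deviation from the standard route.
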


\begin{lemma}\label{lem1}
If $u$ is a solution for problem $(\ref{2.1})$ and
\[E(0)<E_1=\frac{p-2}{2p}\sigma_1^2,\quad l\|\nabla u_0\|_2^2<\sigma_1^2,\]
here $\sigma_1=b^{-\frac{1}{p-2}}B_1^{-\frac{p}{p-2}}$, $B_1=\frac{c_s^p}{l^{\frac p2}}$, then there exists a positive constant $\sigma_2$ satisfying $0<\sigma_2<\sigma_1$ such that
\begin{equation}\label{2}
l\|\nabla u\|_2^2+(g\circ\nabla u)(t)\le\sigma_2^2\quad\text{ for all } t\ge0.
\end{equation}
\end{lemma}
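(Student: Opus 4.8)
The plan is to run the classical potential-well (stable-set) argument organized around the quantity
\[
\alpha(t):=\big(l\|\nabla u\|_2^2+(g\circ\nabla u)(t)\big)^{1/2}.
\]
The first step is a lower bound for $E(t)$ in terms of $\alpha(t)$ alone. Since $1-\int_0^t g(s)\,ds\ge 1-\int_0^\infty g(s)\,ds=l$ and every term in \eqref{2.2} except $-\tfrac bp\|u\|_p^p$ is nonnegative, we get $E(t)\ge\tfrac12\alpha^2(t)-\tfrac bp\|u\|_p^p$. Combining the embedding $H_0^1(\Omega)\hookrightarrow L^p(\Omega)$, i.e.\ $\|u\|_p\le c_s\|\nabla u\|_2$, with $l\|\nabla u\|_2^2\le\alpha^2(t)$ yields $\|u\|_p^p\le c_s^p\|\nabla u\|_2^p\le \tfrac{c_s^p}{l^{p/2}}\alpha^p(t)=B_1\alpha^p(t)$, hence
\[
E(t)\ge h(\alpha(t)),\qquad h(s):=\tfrac12 s^2-\tfrac bp B_1 s^p\quad(s\ge 0).
\]

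The second step is to read off the shape of $h$. From $h'(s)=s\big(1-bB_1 s^{p-2}\big)$ one sees that $h$ is strictly increasing on $[0,\sigma_1]$, strictly decreasing on $[\sigma_1,\infty)$, and $\max_{s\ge0}h(s)=h(\sigma_1)=\tfrac{p-2}{2p}\sigma_1^2=E_1$; in particular $h\ge0$ on $[0,\sigma_1]$, so $E(0)\ge h(\alpha(0))\ge0$. Since $E(0)<E_1$ and $h$ is continuous, we may fix some $\sigma_2\in(0,\sigma_1)$ with $h(\sigma_2)\ge E(0)$. Note also that $\alpha^2(0)=l\|\nabla u_0\|_2^2<\sigma_1^2$ by hypothesis, because $(g\circ\nabla u)(0)=0$.

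The last step is a continuity/barrier argument. The map $t\mapsto\alpha(t)$ is continuous, since $u\in C([0,T);H_0^1(\Omega))$ (Theorem~2.1) and $(g\circ\nabla u)(t)$ depends continuously on $t$. Put $T^\ast=\sup\{t\ge0:\alpha(s)<\sigma_1\ \text{for all }s\in[0,t]\}$; then $T^\ast>0$ because $\alpha(0)<\sigma_1$. For $0\le t<T^\ast$ we have $\alpha(t)<\sigma_1$ and, by Lemma~\ref{lem2.1}, $E(t)\le E(0)$, so $h(\alpha(t))\le E(t)\le E(0)\le h(\sigma_2)$; the monotonicity of $h$ on $[0,\sigma_1]$ then forces $\alpha(t)\le\sigma_2<\sigma_1$ on $[0,T^\ast)$. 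If $T^\ast<\infty$, continuity would give $\alpha(T^\ast)=\sigma_1$, contradicting $\alpha(T^\ast)=\lim_{t\uparrow T^\ast}\alpha(t)\le\sigma_2$. Hence $T^\ast=\infty$ and $\alpha^2(t)\le\sigma_2^2$ for all $t\ge0$, which is exactly \eqref{2}. There is no deep obstacle: the only points requiring care are keeping the combined quantity $l\|\nabla u\|_2^2+(g\circ\nabla u)(t)$ intact when bounding $E$ from below (so that the conclusion concerns that quantity and not just $\|\nabla u\|_2^2$), and making the trapping step rigorous through the first-exit time $T^\ast$.
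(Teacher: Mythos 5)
Your argument is correct and is essentially the paper's own proof: both bound $E(t)$ below by $F\bigl(\sqrt{l\|\nabla u\|_2^2+(g\circ\nabla u)(t)}\bigr)$ with $F(x)=\tfrac12x^2-Cx^p$, use the shape of $F$ together with $E(0)<E_1$ and the monotonicity of $E$ from Lemma \ref{lem2.1}, and close with a continuity/first-exit contradiction. The only discrepancy is in the constant multiplying $s^p$ ($B_1$ versus $B_1^p$), which traces back to an inconsistency in the paper's own definition of $B_1$ (the formula for $\sigma_1$ is consistent with $B_1=c_s/\sqrt{l}$, and your derivation from the embedding is the correct one).
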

\begin{proof}
Taking the combination of \eqref{2.2} and \eqref{02} with the embedding $H_0^1(\Omega)\hookrightarrow L^p(\Omega)$, one has
\begin{equation}
\label{3}
\begin{split}
E(t)&\ge \frac l2\|\nabla u\|_2^2+\frac12(g\circ\nabla u)(t)-\frac {bB_1^p}{p}\Big(l^{\frac12}\|\nabla u\|_2\Big)^p\\
&\ge F\Big(\sqrt{l\|\nabla u\|_2^2+(g\circ\nabla u)(t)}\Big),
\end{split}
\end{equation}
where  $F(x)=\frac 12x^2-\frac{bB_1^p}{p}x^p\text{ for } x>0.$ 
We know that $F$ is strictly increasing in $(0,\sigma_1)$, strictly decreasing in $(\sigma_1,\infty)$, and $F$ has a maximum at $\sigma_1$ with the maximum value $E_1$. 
Since $E(0)<E_1$, there exists a $\sigma_2<\sigma_1$  such that $F(\sigma_2)=E(0)$. Set $\sigma_0:=\sqrt{l\|\nabla u_0\|_2^2}$, recall \eqref{3},  then $F(\sigma_0)\leq E(0)=F(\sigma_2)$, which implies $\sigma_0\le \sigma_2$ since the given condition $\sigma_0^2<\sigma_1^2$. In order to complete the proof of \eqref{2}, we suppose by contradiction that for some $t^0>0$,
\[\sigma(t^0)=\sqrt{l\|\nabla u(t_0)\|_2^2+(g\circ\nabla u)(t_0)}>\sigma_2.\] The continuity of $\sqrt{l\|\nabla u\|_2^2+(g\circ\nabla u)(t)}$ illustrates that we may choose $t^0$
such that $\sigma_1>\sigma(t^0)>\sigma_2$, then we have
\[E(0)=F(\sigma_2)<F(\sigma(t^0))\leq E(t^0).\]
This is a contradiction since Lemma \ref{lem2.1}. 
\end{proof}

\begin{lemma}\label{lem2}
Under all the conditions of Lemma $\ref{lem1}$,  there exists a positive constant $\mathcal{D}$ such that  for all $t\ge 0$,
\begin{equation}
\label{4}
\|u\|_p^p\le \mathcal{D}E(t)\le \mathcal{D}E(0),
\end{equation}
\begin{equation}
\label{5}
\begin{split}
&\frac{1}{\rho+2}\|u_t\|_{\rho+2}^{\rho+2}+\frac12\Big(1-\int_0^tg(s)ds\Big)\|\nabla u\|_2^2+\frac12(g\circ\nabla u)(t)+\frac12\|\nabla u_t\|_2^2\\
&\quad+\frac{\xi}{2}\int_\Omega\int_0^1z^2(x,\kappa,t)d\kappa dx\le \mathcal{D}E(t)\le \mathcal{D}E(0).
\end{split}
\end{equation}

\end{lemma}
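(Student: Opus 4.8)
The plan is to deduce both \eqref{4} and \eqref{5} directly from Lemma \ref{lem1} together with a small sharpening of the estimate \eqref{3} already obtained in the proof of Lemma \ref{lem1}. The starting observation is that, by the definition \eqref{2.2} of the energy, the left-hand side of \eqref{5} is \emph{exactly} $E(t)+\tfrac bp\|u\|_p^p$; hence it suffices to bound $\|u\|_p^p$ by a fixed multiple of $E(t)$ (and to recall $E(t)\le E(0)$ from Lemma \ref{lem2.1}), after which \eqref{4} follows at once and \eqref{5} follows by adding $E(t)$ back.

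First I would set $Q(t):=l\|\nabla u\|_2^2+(g\circ\nabla u)(t)$, so that Lemma \ref{lem1} gives $Q(t)\le\sigma_2^2<\sigma_1^2$ for all $t\ge0$. Revisiting \eqref{3} and using that $\sigma_1$ is the maximiser of $F$, equivalently $bB_1^p\sigma_1^{p-2}=1$, I would write
\[E(t)\ge F\big(\sqrt{Q(t)}\big)=Q(t)\Big(\tfrac12-\tfrac{bB_1^p}{p}Q(t)^{\frac{p-2}{2}}\Big)\ge Q(t)\Big(\tfrac12-\tfrac{bB_1^p\sigma_2^{p-2}}{p}\Big)>\tfrac{p-2}{2p}\,Q(t),\]
the last inequality because $bB_1^p\sigma_2^{p-2}<bB_1^p\sigma_1^{p-2}=1$; thus $Q(t)\le\tfrac{2p}{p-2}E(t)$. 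On the other hand, the Sobolev embedding $H_0^1(\Omega)\hookrightarrow L^p(\Omega)$ used in \eqref{3} gives $\|u\|_p^p\le B_1^p\big(l\|\nabla u\|_2^2\big)^{p/2}$, and since $l\|\nabla u\|_2^2\le Q(t)\le\sigma_2^2$ one may peel off one factor to obtain $\|u\|_p^p\le B_1^p\sigma_2^{p-2}\,l\|\nabla u\|_2^2\le B_1^p\sigma_2^{p-2}Q(t)$. Combining the two estimates yields $\|u\|_p^p\le\tfrac{2pB_1^p\sigma_2^{p-2}}{p-2}E(t)$.

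With this in hand, \eqref{4} holds with any $\mathcal D\ge\tfrac{2pB_1^p\sigma_2^{p-2}}{p-2}$, using $E(t)\le E(0)$, and \eqref{5} holds with any $\mathcal D\ge1+\tfrac{2bB_1^p\sigma_2^{p-2}}{p-2}$ since its left-hand side equals $E(t)+\tfrac bp\|u\|_p^p\le\big(1+\tfrac{2bB_1^p\sigma_2^{p-2}}{p-2}\big)E(t)\le\big(1+\tfrac{2bB_1^p\sigma_2^{p-2}}{p-2}\big)E(0)$; one then takes $\mathcal D$ to be the larger of the two constants. I do not expect a genuine obstacle here: once Lemma \ref{lem1} is available the whole argument is a substitution. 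The only point needing care is keeping $Q(t)$ strictly below $\sigma_1$, the maximiser of $F$, so that the coefficient $\tfrac{p-2}{2p}$ above stays positive — and this is precisely what the subcritical-energy hypothesis $E(0)<E_1$ provides through Lemma \ref{lem1}.
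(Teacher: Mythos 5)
Your proof is correct and follows essentially the same route as the paper: both reduce \eqref{5} to bounding $\|u\|_p^p$ by a multiple of $E(t)$ via the Sobolev embedding, the peeling-off step $\|u\|_p^p\le B_1^p\sigma_2^{p-2}\,Q(t)$ with $Q(t)=l\|\nabla u\|_2^2+(g\circ\nabla u)(t)\le\sigma_2^2$ from Lemma \ref{lem1}, and the monotonicity $E(t)\le E(0)$. The only difference is in closing the estimate: the paper bounds $Q(t)\le 2\big(E(t)+\tfrac bp\|u\|_p^p\big)$ and absorbs the $\|u\|_p^p$ term into the left-hand side (giving $\mathcal D=\tfrac{2pB_1^p\sigma_2^{p-2}}{p-2bB_1^p\sigma_2^{p-2}}$), whereas you derive $Q(t)\le\tfrac{2p}{p-2}E(t)$ directly from $E(t)\ge F\big(\sqrt{Q(t)}\big)$ together with $bB_1^p\sigma_2^{p-2}<bB_1^p\sigma_1^{p-2}=1$ --- equally valid bookkeeping that avoids the absorption and yields a slightly different admissible constant.
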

\begin{proof}
Using the embedding $H_0^1(\Omega)\hookrightarrow L^p(\Omega)$, \eqref{2.2} and \eqref{2}, we have
\begin{equation*}
\begin{split}
\frac bp\|u\|_p^p&\le \frac {bB_1^p}{p}\Big(l^{\frac 12}\|\nabla u\|_2\Big)^p\le \frac {bB_1^p}{p}\Big(l\|\nabla u\|_2^2+(g\circ\nabla u)(t)\Big)^{\frac {p-2}{2}}[l\|\nabla u\|_2^2+(g\circ\nabla u)(t)]\\
&\le \frac {2bB_1^p}{p}\Big(l\|\nabla u\|_2^2+(g\circ\nabla u)(t)\Big)^{\frac {p-2}{2}} \Big(E(t)+\frac bp\|u\|_p^p\Big)\\
&\le \frac {2bB_1^p}{p}\sigma_2^{p-2} \Big(E(t)+\frac bp\|u\|_p^p\Big),
\end{split}
\end{equation*}
which yields \eqref{4} with
\begin{equation}
\label{03}
\mathcal{D}=\frac{2pB_1^p\sigma_2^{p-2}}{p-2bB_1^p\sigma_2^{p-2}}>0.
\end{equation}
One has \eqref{5} by combining \eqref{4} with \eqref{2.2}, 
\end{proof}

\begin{lemma}[Lemma 4.1 in \cite{BAM2020}]\label{lem2.2}
For $u\in H_0^1(\Omega)$, we have for all $t\ge 0$,
\begin{equation}
\label{2.4}
\int_\Omega\Big(\int_0^tg(t-s)(\nabla u(s)-\nabla u(t))ds\Big)^2dx\le C_\alpha(h_\alpha\circ \nabla u)(t)
\end{equation}
where, for any $0<\alpha<1$,
\begin{equation}
\label{2.5}
C_\alpha=\int_0^\infty\frac{g^2(s)}{\alpha g(s)-g'(s)}ds\text{ and }h_\alpha(t)=\alpha g(t)-g'(t).
\end{equation}
\end{lemma}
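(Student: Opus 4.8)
The plan is to reduce this to a single application of the Cauchy--Schwarz inequality in the $s$-variable, carried out pointwise in $x$, followed by an integration over $\Omega$ and Fubini's theorem. The essential algebraic device is to split the kernel as
\[
g(t-s)=\frac{g(t-s)}{\sqrt{h_\alpha(t-s)}}\cdot\sqrt{h_\alpha(t-s)},
\]
which is legitimate because $h_\alpha(\sigma)=\alpha g(\sigma)-g'(\sigma)>0$ for every $\sigma\ge 0$: we have $g>0$ and $\alpha>0$, while \eqref{add02} together with the positivity of $\zeta$ and $G$ forces $g'(\sigma)<0$, so $\alpha g(\sigma)-g'(\sigma)\ge \alpha g(\sigma)>0$.

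First I would fix $x\in\Omega$ and $t\ge 0$ and apply Cauchy--Schwarz to the inner integral with the above splitting, obtaining
\[
\left(\int_0^t g(t-s)\big(\nabla u(s)-\nabla u(t)\big)\,ds\right)^2\le\left(\int_0^t\frac{g^2(t-s)}{h_\alpha(t-s)}\,ds\right)\left(\int_0^t h_\alpha(t-s)\,|\nabla u(s)-\nabla u(t)|^2\,ds\right).
\]
For the first factor I would substitute $\sigma=t-s$ and then enlarge the interval of integration from $[0,t]$ to $[0,\infty)$ (the integrand is nonnegative), which produces precisely the constant $C_\alpha=\int_0^\infty\frac{g^2(\sigma)}{\alpha g(\sigma)-g'(\sigma)}\,d\sigma$ defined in \eqref{2.5}. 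At this point one should also record that $C_\alpha<\infty$: since $\alpha g-g'\ge \alpha g$ we have $\frac{g^2}{\alpha g-g'}\le \frac{g}{\alpha}$, and $\int_0^\infty g(s)\,ds=1-l<\infty$ by \eqref{02}, so $C_\alpha\le \frac{1-l}{\alpha}$.

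Finally I would integrate the resulting pointwise bound over $\Omega$, pull the constant $C_\alpha$ outside, and use Fubini's theorem (applicable since the integrand is nonnegative and measurable) to exchange the order of integration in the remaining term, turning $\int_\Omega\int_0^t h_\alpha(t-s)\,|\nabla u(s)-\nabla u(t)|^2\,ds\,dx$ into $\int_0^t h_\alpha(t-s)\,\|\nabla u(s)-\nabla u(t)\|_2^2\,ds=(h_\alpha\circ\nabla u)(t)$. This yields \eqref{2.4}. I do not anticipate a genuine obstacle here; the argument is routine, and the only points that deserve a line of justification are the strict positivity of $h_\alpha$ (needed for the splitting and for dividing by $h_\alpha$) and the finiteness of $C_\alpha$, both of which follow at once from $(\mathbf{H_1})$.
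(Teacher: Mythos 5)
Your proof is correct and is essentially the same argument as the one in the cited reference (the paper itself only quotes Lemma 4.1 of \cite{BAM2020} without reproducing its proof): the kernel splitting $g=\frac{g}{\sqrt{h_\alpha}}\cdot\sqrt{h_\alpha}$, Cauchy--Schwarz in $s$, enlargement of the $s$-integral to $[0,\infty)$ to produce $C_\alpha$, and Fubini over $\Omega$ is exactly the standard route. Your added remarks on the strict positivity of $h_\alpha$ and the bound $C_\alpha\le\frac{1-l}{\alpha}$ are accurate and appropriately justified from $(\mathbf{H_1})$.
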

Let us follow from the proof of Lemma 4.1 in \cite{BAM2020}, we have in fact 
\begin{equation}
\label{04}
\int_\Omega\Big(\int_0^tg(t-s)(u(s)-u(t))ds\Big)^2dx\le C_\alpha(h_\alpha\circ u)(t).
\end{equation}

\begin{lemma}[Lemma 2.2 in \cite{CB2021}]\label{lem01}
There exist positive constants $\gamma$ and $t_1$ such that 
\begin{equation}
\label{05}
g'(t)\le-\gamma g(t)\quad\text{for }t\in[0,t_1].
\end{equation}
\end{lemma}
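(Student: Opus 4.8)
The plan is to obtain this local, exponential-type bound directly from the continuity and strict positivity of $g$ together with the monotonicity of $\zeta$, with essentially no appeal to the convexity of $G$. First I would fix an arbitrary $t_1>0$ (the statement only asks for the existence of \emph{some} such constant). From $(\mathbf{H_1})$ and \eqref{add02} we have $g'(t)\le-\zeta(t)G(g(t))<0$ for every $t\ge 0$, since $\zeta>0$ and $G$ takes values in $(0,\infty)$; hence $g$ is strictly decreasing, and in particular $0<g(t_1)\le g(t)\le g(0)$ for all $t\in[0,t_1]$.

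Next I would invoke compactness. Since $G\in C^1((0,\infty))$ is positive and $g$ is continuous and strictly positive, the composition $t\mapsto G(g(t))$ is continuous and strictly positive on the compact interval $[0,t_1]$, so it attains a positive minimum $\delta:=\min_{t\in[0,t_1]}G(g(t))>0$. Writing $M:=g(0)=\max_{t\in[0,t_1]}g(t)$, for every $t\in[0,t_1]$ we then get $G(g(t))\ge\delta=\frac{\delta}{M}\,M\ge\frac{\delta}{M}\,g(t)$. On the other hand, as $\zeta$ is positive and non-increasing, $\zeta(t)\ge\zeta(t_1)>0$ for all $t\in[0,t_1]$. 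Substituting both estimates into \eqref{add02} yields $g'(t)\le-\zeta(t)G(g(t))\le-\zeta(t_1)\frac{\delta}{M}\,g(t)$ on $[0,t_1]$, so \eqref{05} holds with $\gamma:=\zeta(t_1)\delta/M>0$.

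There is no serious obstacle here; the one point that needs a little attention is converting the a priori possibly degenerate quantity $G(g(t))$ into an honest multiple of $g(t)$, and this is exactly where the compactness of $[0,t_1]$ does the work, forcing $g$ to stay bounded away from both $0$ and $\infty$ on that interval so that $G\circ g$ cannot degenerate. One might instead be tempted to use that $x\mapsto G(x)/x$ is non-decreasing on $(0,r]$ (by convexity of $G$ and $G(0)=0$); however, since the hypotheses only guarantee $r\le g(0)$ and not $g(0)\le r$, the values $g(t)$ for small $t$ need not lie in $(0,r]$, so the compactness argument above is the more robust route.
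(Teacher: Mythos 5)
Your proof is correct, and it is essentially the standard argument for this lemma (the paper itself gives no proof, citing Lemma 2.2 of \cite{CB2021}, where the same reasoning is used): on a compact interval $[0,t_1]$ the non-increasing positive function $g$ is trapped in $[g(t_1),g(0)]$, so $G\circ g$ is bounded below by a positive constant, which converts \eqref{add02} into the linear bound \eqref{05} with $\gamma=\zeta(t_1)\delta/g(0)$. Your closing remark about why one cannot instead rely on convexity of $G$ on $(0,r]$ (since $r\le g(0)$ does not force $g(t)\in(0,r]$ near $t=0$) is a correct and worthwhile observation.
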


\begin{lemma}\label{lem2.3}
Let $u$ be a solution of problem \eqref{2.1}, then the functional 
\begin{equation}
\label{2.6}
I_1(t)=\frac {1}{\rho+1}\int_\Omega |u_t|^\rho u_tudx+\int_\Omega \nabla u_t\nabla udx,
\end{equation}
satisfies,  for $\varepsilon>0$ and for all $t\ge 0$,
\begin{equation}
\label{2.7}
\begin{split}
I_1'(t)&\le\frac{1}{\rho+1}\|u_t\|_{\rho+2}^{\rho+2}-\Big[l-\Big(1+\frac{\mu_1}{\lambda_1}+\frac{\mu_2}{\lambda_1}\Big)\varepsilon\Big]\|\nabla u\|_2^2+\frac{1}{4\varepsilon}C_\alpha(h_\alpha\circ \nabla u)(t)\\
&\quad+\frac{|\mu_2|}{4\varepsilon}\|z(x,1,t)\|_2^2+b\|u\|_p^p+\|\nabla u_t\|_2^2+\frac{\mu_1}{4\varepsilon}\|u_t\|_2^2.
\end{split}
\end{equation}
\end{lemma}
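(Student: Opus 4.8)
\medskip

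The plan is to differentiate $I_1(t)$ directly, use the first equation of \eqref{2.1} to replace the second-order-in-time terms, and then estimate each resulting term by Young's (Cauchy–Schwarz with a parameter $\varepsilon$) inequality together with the Poincaré inequality $\|u\|_2^2 \le \lambda_1^{-1}\|\nabla u\|_2^2$. First I would compute
\[
I_1'(t)=\frac{1}{\rho+1}\int_\Omega\Big(|u_t|^\rho u_t\Big)_t u\,dx+\frac{1}{\rho+1}\int_\Omega|u_t|^\rho u_t\,u_t\,dx+\int_\Omega\nabla u_{tt}\nabla u\,dx+\int_\Omega\nabla u_t\nabla u_t\,dx.
\]
The first and third integrals combine, after an integration by parts in space, into $\int_\Omega\big(|u_t|^\rho u_{tt}-\Delta u_{tt}\big)u\,dx$, which by the PDE equals
\[
\int_\Omega u\Big(\Delta u-\int_0^t g(t-s)\Delta u(s)\,ds-\mu_1 u_t-\mu_2 z(x,1,t)+b|u|^{p-2}u\Big)dx.
\]
Integrating by parts once more (using the Dirichlet boundary condition) turns $\int_\Omega u\,\Delta u\,dx$ into $-\|\nabla u\|_2^2$, and turns the memory term into $\int_\Omega\nabla u\cdot\big(\int_0^t g(t-s)\nabla u(s)\,ds\big)dx$. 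The remaining two original integrals are $\frac{1}{\rho+1}\|u_t\|_{\rho+2}^{\rho+2}$ and $\|\nabla u_t\|_2^2$, which appear verbatim in \eqref{2.7}.

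\medskip

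Next I would handle the memory term by the standard trick of writing $\int_0^t g(t-s)\nabla u(s)\,ds = \big(\int_0^t g(s)\,ds\big)\nabla u(t) - \int_0^t g(t-s)(\nabla u(t)-\nabla u(s))\,ds$, so that
\[
\int_\Omega\nabla u\cdot\!\int_0^t g(t-s)\nabla u(s)\,ds\,dx \le \Big(\int_0^t g(s)\,ds\Big)\|\nabla u\|_2^2 + \varepsilon\|\nabla u\|_2^2 + \frac{1}{4\varepsilon}\int_\Omega\Big(\int_0^t g(t-s)(\nabla u(t)-\nabla u(s))\,ds\Big)^2 dx,
\]
and then invoke Lemma \ref{lem2.2} (inequality \eqref{2.4}) to bound the last integral by $\frac{1}{4\varepsilon}C_\alpha(h_\alpha\circ\nabla u)(t)$. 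Combining $-\|\nabla u\|_2^2$ with the $\big(\int_0^t g(s)\,ds\big)\|\nabla u\|_2^2$ contribution produces the coefficient $-(1-\int_0^t g(s)\,ds)\le -l$ in front of $\|\nabla u\|_2^2$, by \eqref{02}. For the delay term and the damping term I would estimate $-\mu_2\int_\Omega u\,z(x,1,t)\,dx \le \varepsilon\frac{\mu_2}{\lambda_1}\|\nabla u\|_2^2 + \frac{|\mu_2|}{4\varepsilon}\|z(x,1,t)\|_2^2$ and $-\mu_1\int_\Omega u\,u_t\,dx \le \varepsilon\frac{\mu_1}{\lambda_1}\|\nabla u\|_2^2 + \frac{\mu_1}{4\varepsilon}\|u_t\|_2^2$, both using Poincaré to convert $\|u\|_2^2$ into $\lambda_1^{-1}\|\nabla u\|_2^2$; the source term contributes exactly $b\|u\|_p^p$. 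Collecting the three $\varepsilon$-terms in front of $\|\nabla u\|_2^2$ gives the factor $\big(1+\frac{\mu_1}{\lambda_1}+\frac{\mu_2}{\lambda_1}\big)\varepsilon$ appearing in \eqref{2.7}, which completes the derivation.

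\medskip

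I do not anticipate a serious obstacle here: the argument is a routine multiplier computation. The one point requiring a little care is the bookkeeping of signs and of the Poincaré constants when splitting the cross terms $-\mu_1\int u u_t$ and $-\mu_2\int u z(x,1,t)$ — one must be consistent about whether $|\mu_2|$ or $\mu_2$ is used, since $\mu_2$ is only assumed real, and about keeping the $\|\nabla u\|_2^2$ coefficients grouped so that the bracket $l-(1+\mu_1/\lambda_1+\mu_2/\lambda_1)\varepsilon$ comes out exactly as stated. A second mild subtlety is justifying the time-differentiation of $\frac{1}{\rho+1}\int_\Omega|u_t|^\rho u_t u\,dx$ and the spatial integrations by parts at the regularity guaranteed by Theorem 2.1; this is handled as usual by a density/approximation argument and I would simply remark that it holds for the regularized solutions and passes to the limit.
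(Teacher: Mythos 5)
Your proposal is correct and follows essentially the same route as the paper: differentiate $I_1$, substitute the first equation of \eqref{2.1} for $\int_\Omega(|u_t|^\rho u_{tt}-\Delta u_{tt})u\,dx$, split the memory term around $\nabla u(t)$, and estimate the cross terms with Cauchy's inequality, Poincar\'e's inequality and Lemma \ref{lem2.2}. Your remark about $\mu_2$ versus $|\mu_2|$ is well taken — the paper's own intermediate estimate \eqref{2.101} produces $\frac{|\mu_2|\varepsilon}{\lambda_1}\|\nabla u\|_2^2$, so the coefficient in \eqref{2.7} should read $|\mu_2|/\lambda_1$ (as it indeed does later in \eqref{c}).
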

\begin{proof}
Multiplying the first identity in problem \eqref{2.1} by $u$, integrating on $x$ over $\Omega$, and then using integration by parts, we give
\begin{equation}
\label{2.8}
\begin{split}
&\int_\Omega|u_t|^\rho u_{tt}udx+\int_\Omega \nabla u_{tt}\nabla udx\\
&\quad=-\Big(1-\int_0^tg(s)ds\Big)\|\nabla u\|_2^2+\int_\Omega \nabla u(t)\int_0^tg(t-s)\nabla (u(s)-u(t))ds\\
&\quad\quad-\int_\Omega \mu_1u_t(x,t)udx-\int_\Omega \mu_2 z(x,1,t)udx+b\|u\|_p^p.
\end{split}
\end{equation}
Differentiating \eqref{2.6} on $t$, and using \eqref{2.8}, one has
\begin{equation}
\label{2.9}
\begin{split}
I_1'(t)&=\frac{1}{\rho+1}\|u_t\|_{\rho+2}^{\rho+2}+\int_\Omega|u_t|^\rho u_{tt}udx+\int_\Omega \nabla u_{tt}\nabla udx+\|\nabla u_t\|_2^2\\
&=\frac{1}{\rho+1}\|u_t\|_{\rho+2}^{\rho+2}-\Big(1-\int_0^tg(s)ds\Big)\|\nabla u\|_2^2\\
&\quad+\int_\Omega \nabla u(t)\int_0^tg(t-s)\nabla (u(s)- u(t))ds\\
&\quad-\int_\Omega \mu_1u_t(x,t)udx-\int_\Omega \mu_2 z(x,1,t)udx+b\|u\|_p^p+\|\nabla u_t\|_2^2.
\end{split}
\end{equation}
Applying Cauchy's inequality with $\varepsilon>0$ and $\lambda_1\|u\|_2^2\le \|\nabla u\|_2^2$, it follows that 
\begin{equation}
\begin{split}
\label{2.10}
-\int_\Omega \mu_1u_t(x,t)udx&\le \frac{\mu_1}{4\varepsilon}\|u_t\|_2^2+\mu_1\varepsilon\|u\|_2^2\le \frac{\mu_1}{4\varepsilon}\|u_t\|_2^2+\frac{\mu_1\varepsilon}{\lambda_1}\|\nabla u\|_2^2,
\end{split}
\end{equation}
\begin{equation}
\label{2.101}
-\int_\Omega \mu_2 z(x,1,t)udx\le \frac{|\mu_2|}{4\varepsilon}\|z(x,1,t)\|_2^2+\frac{|\mu_2|\varepsilon}{\lambda_1}\|\nabla u\|_2^2.
\end{equation}
It follows from Cauchy's inequality with $\varepsilon>0$ and \eqref{2.4} that 
\begin{equation}
\label{2.11}
\int_\Omega \nabla u(t)\int_0^tg(t-s)(\nabla u(s)-\nabla u(t))ds\le \varepsilon\|\nabla u\|_2^2+\frac{1}{4\varepsilon}C_\alpha(h\circ \nabla u)(t).
\end{equation}
Inserting \eqref{2.10}-\eqref{2.11} into \eqref{2.9}, we obtain \eqref{2.7}.
\end{proof}

\begin{lemma}\label{lem2.5}
Under all the conditions of Lemma $\ref{lem1}$, let $u$ be a solution of problem \eqref{2.1}, then the functional 
\begin{equation}
\label{2.13}
I_2(t)=\int_\Omega\Big(\Delta u_t-\frac{1}{\rho+1}|u_t|^\rho u_t\Big)\int_0^t g(t-s)(u(t)-u(s))dsdx,
\end{equation}
satisfies, for $\delta>0$ and for all $t\ge 0$,
\begin{equation}
\label{2.14}
\begin{split}
I_2'(t)&\le B_1\|\nabla u\|_2^2+B_2(h_\alpha\circ \nabla u)(t)+\Big[B_3-\int_0^tg(s)ds\Big] \|\nabla u_t\|_2^2\\
&\quad+\delta\|z(x,1,t)\|_2^2-\int_0^tg(s)ds\cdot\frac{1}{\rho+1}\|u_t\|_{\rho+2}^{\rho+2},
\end{split}
\end{equation}
here $B_1$, $B_2$ and $B_3$ are positive constants depending on $\delta$ shown in \eqref{b}.
\end{lemma}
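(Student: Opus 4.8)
The plan is to differentiate $I_2$ in \eqref{2.13}, eliminate the second time--derivatives by means of the first equation in \eqref{2.1}, integrate by parts in $x$, and then force every surviving term into one of the five quantities on the right of \eqref{2.14}. Write $\eta(t):=\int_0^tg(t-s)(u(t)-u(s))\,ds$, so that $\eta(t)\in H_0^1(\Omega)$ with $\nabla\eta=\int_0^tg(t-s)(\nabla u(t)-\nabla u(s))\,ds$ and $\eta=u_t=0$ on $\partial\Omega$. Using $\tfrac{d}{dt}(|u_t|^\rho u_t)=(\rho+1)|u_t|^\rho u_{tt}$, the equation in the form
\[
\frac{d}{dt}\Bigl(\Delta u_t-\tfrac{1}{\rho+1}|u_t|^\rho u_t\Bigr)=-\Delta u+\int_0^tg(t-s)\Delta u(s)\,ds+\mu_1u_t+\mu_2z(x,1,t)-b|u|^{p-2}u,
\]
and $\eta'(t)=\int_0^tg'(t-s)(u(t)-u(s))\,ds+\bigl(\int_0^tg(s)\,ds\bigr)u_t(t)$, one computes $I_2'(t)$ and rewrites $\int_0^tg(t-s)\Delta u(s)\,ds=\bigl(\int_0^tg(s)\,ds\bigr)\Delta u(t)+\int_0^tg(t-s)\Delta(u(s)-u(t))\,ds$. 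After integration by parts, $I_2'(t)$ becomes the sum of the two favourable negative terms $-\bigl(\int_0^tg(s)\,ds\bigr)\|\nabla u_t\|_2^2$ and $-\tfrac{1}{\rho+1}\bigl(\int_0^tg(s)\,ds\bigr)\|u_t\|_{\rho+2}^{\rho+2}$ (which already supply the last two terms of \eqref{2.14}), plus $\|\nabla\eta\|_2^2$ and the cross terms $\bigl(1-\int_0^tg(s)\,ds\bigr)\int_\Omega\nabla u\cdot\nabla\eta\,dx$, $\mu_1\int_\Omega u_t\eta\,dx$, $\mu_2\int_\Omega z(x,1,t)\eta\,dx$, $-b\int_\Omega|u|^{p-2}u\,\eta\,dx$, $-\int_\Omega\nabla u_t\cdot\int_0^tg'(t-s)(\nabla u(t)-\nabla u(s))\,ds\,dx$, and $-\tfrac{1}{\rho+1}\int_\Omega|u_t|^\rho u_t\int_0^tg'(t-s)(u(t)-u(s))\,ds\,dx$.

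Next I would estimate the cross terms by Young's inequality with free parameters. By \eqref{2.4} in Lemma~\ref{lem2.2}, $\|\nabla\eta\|_2^2\le C_\alpha(h_\alpha\circ\nabla u)(t)$, and $\|\eta\|_2^2\le\lambda_1^{-1}\|\nabla\eta\|_2^2$ by Poincaré. Since $g'\le0$ by $(\mathbf{H_1})$, we have $|g'|=-g'\le\alpha g-g'=h_\alpha$, hence $\int_0^t|g'(t-s)|\,ds=g(0)-g(t)\le g(0)$ and, by Cauchy--Schwarz, $\int_\Omega\bigl(\int_0^tg'(t-s)(\nabla u(t)-\nabla u(s))\,ds\bigr)^2dx\le g(0)\,(h_\alpha\circ\nabla u)(t)$. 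Using these, together with the embeddings $\|\eta\|_p\le c_s\|\nabla\eta\|_2$ and $\|u\|_p\le c_s\|\nabla u\|_2$, Young's inequality turns $\bigl(1-\int_0^tg(s)\,ds\bigr)\int_\Omega\nabla u\cdot\nabla\eta\,dx$ and $-b\int_\Omega|u|^{p-2}u\,\eta\,dx$ into multiples of $\|\nabla u\|_2^2$ and $(h_\alpha\circ\nabla u)(t)$ (here the bound $l\|\nabla u\|_2^2\le\sigma_2^2$ from Lemma~\ref{lem1} absorbs the spurious power $\|\nabla u\|_2^{p-2}$), turns $\mu_1\int_\Omega u_t\eta\,dx$ and $-\int_\Omega\nabla u_t\cdot\int_0^tg'(t-s)(\nabla u(t)-\nabla u(s))\,ds\,dx$ into multiples of $\|\nabla u_t\|_2^2$ and $(h_\alpha\circ\nabla u)(t)$ (using Poincaré for the former), and turns $\mu_2\int_\Omega z(x,1,t)\eta\,dx$ into $\delta\|z(x,1,t)\|_2^2+C(h_\alpha\circ\nabla u)(t)$ once the Young parameter is chosen to be $\delta/|\mu_2|$.

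The only delicate term is $-\tfrac{1}{\rho+1}\int_\Omega|u_t|^\rho u_t\int_0^tg'(t-s)(u(t)-u(s))\,ds\,dx$: a crude Young estimate would create a positive multiple of $\|u_t\|_{\rho+2}^{\rho+2}$, which has no counterpart in \eqref{2.14}. Instead, I would bound it by $\tfrac{1}{\rho+1}\|u_t\|_{\rho+2}^{\rho+1}\bigl\|\int_0^tg'(t-s)(u(t)-u(s))\,ds\bigr\|_{\rho+2}$ by H\"older, use $\tfrac{1}{\rho+2}\|u_t\|_{\rho+2}^{\rho+2}\le\mathcal{D}E(0)$ from \eqref{5} to control the factor $\|u_t\|_{\rho+2}^{\rho}$ by a constant, retain one factor $\|u_t\|_{\rho+2}$, apply Young's inequality to split into multiples of $\|u_t\|_{\rho+2}^2$ and $\bigl\|\int_0^tg'(t-s)(u(t)-u(s))\,ds\bigr\|_{\rho+2}^2$, and conclude with the embedding $\|u_t\|_{\rho+2}^2\le c_s^2\|\nabla u_t\|_2^2$ (valid since $\rho+2\le\frac{2N}{N-2}$ under $(\mathbf{H_2})$) together with $\bigl\|\int_0^tg'(t-s)(u(t)-u(s))\,ds\bigr\|_{\rho+2}\le c_s\bigl\|\int_0^tg'(t-s)(\nabla u(t)-\nabla u(s))\,ds\bigr\|_2$, the latter reduced by the Cauchy--Schwarz/$|g'|\le h_\alpha$ estimate above; thus this term contributes only to $\|\nabla u_t\|_2^2$ and $(h_\alpha\circ\nabla u)(t)$. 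Gathering all coefficients of $\|\nabla u\|_2^2$ into $B_1$, of $(h_\alpha\circ\nabla u)(t)$ into $B_2$, and of $\|\nabla u_t\|_2^2$ (together with the favourable $-\int_0^tg(s)\,ds$) into $B_3-\int_0^tg(s)\,ds$, one arrives at \eqref{2.14}. I expect this quasilinear term to be the main obstacle: the whole point is to prevent the damping $|u_t|^\rho u_t$ from introducing the superlinear velocity norm $\|u_t\|_{\rho+2}^{\rho+2}$; the remaining estimates are routine once the a priori bounds of Lemmas~\ref{lem1} and \ref{lem2} are at hand.
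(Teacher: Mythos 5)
Your proposal is correct and follows essentially the same route as the paper: differentiate $I_2$, substitute the equation, integrate by parts into the same collection of cross terms (the paper's $J_1,\dots,J_9$), and estimate each via Young/Cauchy--Schwarz, Lemma \ref{lem2.2}, Poincar\'e, Sobolev embedding, and the a priori bounds of Lemmas \ref{lem1}--\ref{lem2}, with the last of these used exactly as you describe to keep the quasilinear term from producing a superlinear velocity norm. The only differences are cosmetic: the paper splits $g'=\alpha g-h_\alpha$ where you use $|g'|\le h_\alpha$ directly, and it treats the quasilinear term through $\|u_t\|_{2(\rho+1)}^{2(\rho+1)}$ rather than your H\"older pairing in $L^{\rho+2}$; both are admissible under $(\mathbf{H_2})$.
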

\begin{proof}
Differentiating \eqref{2.13} on $t$, and using  the first identity in problem \eqref{2.1} and integration by parts, we have
\begin{equation}
\label{2.15}
\begin{split}
I_2'(t)&=\int_\Omega\Big(\Delta u_{tt}-|u_t|^\rho u_{tt}\Big)\int_0^t g(t-s)(u(t)-u(s))dsdx\\
&\quad+\int_\Omega\Big(\Delta u_t-\frac{1}{\rho+1}|u_t|^\rho u_t\Big)\int_0^t g_t(t-s)(u(t)-u(s))dsdx\\
&\quad+\int_\Omega\Big(\Delta u_t-\frac{1}{\rho+1}|u_t|^\rho u_t\Big)\int_0^t g(t-s)u_t(t)dsdx\\
&=\int_\Omega \nabla u(t)\int_0^tg(t-s)\nabla (u(t)-u(s))ds\\
&\quad-\int_\Omega \int_0^tg(t-s)\nabla u(s)ds\int_0^tg(t-s)\nabla (u(t)-u(s))dsdx\\
&\quad+\int_\Omega \mu_1u_t(x,t)\int_0^t g(t-s)(u(t)-u(s))dsdx\\
&\quad+\int_\Omega \mu_2 z(x,1,t)\int_0^t g(t-s)(u(t)-u(s))dsdx\\
&\quad-b\int_\Omega |u|^{p-2}u\int_0^t g(t-s)(u(t)-u(s))dsdx\\
&\quad -\int_\Omega\nabla  u_t\int_0^t g_t(t-s)\nabla(u(t)-u(s))dsdx\\
&\quad-\int_\Omega\frac{1}{\rho+1}|u_t|^\rho u_t\int_0^t g_t(t-s)(u(t)-u(s))dsdx\\
&\quad-\int_0^tg(s)ds\cdot\|\nabla u_t\|_2^2-\int_0^tg(s)ds\cdot\frac{1}{\rho+1}\|u_t\|_{\rho+2}^{\rho+2}\\
&= J_1+J_2+\cdots+J_8+J_9.
\end{split}
\end{equation}
It is direct from Cauchy's inequality with $\delta>0$  and \eqref{2.4} that
\begin{equation}
\label{add2.16}
J_1\le \delta\|\nabla u\|_2^2+\frac{1}{4\delta}C_\alpha(h_\alpha\circ \nabla u)(t),
\end{equation}
\begin{equation}
\label{2.16}
\begin{split}
J_2&\le \delta\int_\Omega\Big(\int_0^t g(t-s)|\nabla u(s)-\nabla u(t)|+|\nabla u(t)|ds\Big)^2dx\\
&\quad +\frac{1}{4\delta}\int_\Omega\Big(\int_0^t g(t-s)|\nabla u(s)-\nabla u(t)|ds\Big)^2dx\\
&\le 2\delta\int_\Omega\Big(\int_0^t g(t-s)|\nabla u(s)-\nabla u(t)|ds\Big)^2dx+2\delta(1-l)^2\|\nabla u(t)\|_2^2\\
&\quad +\frac{1}{4\delta}\int_\Omega\Big(\int_0^t g(t-s)|\nabla u(s)-\nabla u(t)|ds\Big)^2dx\\
&\le \Big(2\delta+\frac{1}{4\delta}\Big)C_\alpha(h_\alpha\circ \nabla u)(t)+2\delta(1-l)^2\|\nabla u\|_2^2.
\end{split}
\end{equation}
Cauchy's inequality with $\delta>0$ and \eqref{04} yield 
\begin{equation}
\label{6}
\begin{split}
J_3\le \delta \|u_t\|_2^2+\frac{\mu_1^2}{4\delta}C_\alpha(h_\alpha\circ u)(t)\le\frac{\delta}{\lambda_1} \|\nabla u_t\|_2^2+\frac{\mu_1^2}{4\delta\lambda_1}C_\alpha(h_\alpha\circ \nabla u)(t),
\end{split}
\end{equation}
\begin{equation}
\label{7}
\begin{split}
J_4&\le \delta\|z(x,1,t)\|_2^2+\frac{\mu_2^2}{4\delta\lambda_1}C_\alpha(h_\alpha\circ \nabla u)(t).
\end{split}
\end{equation}
It follows from Cauchy's inequality with $\delta>0$,  \eqref{04}, the embedding $H_0^1(\Omega)\hookrightarrow L^{2(p-1)}(\Omega)$ and \eqref{5} that 
\begin{equation}
\label{1}
\begin{split}
J_5&\le    b\delta\|u\|_{2(p-1)}^{2(p-1)}+\frac{b}{4\delta}\int_\Omega \Big(\int_0^t g(t-s)(u(t)-u(s))ds\Big)^2dx\\
& \le b\delta\|u\|_{2(p-1)}^{2(p-1)}+\frac{b}{4\delta}C_\alpha (h_\alpha\circ u)(t)\\
&\le b\delta c_s^{2(p-1)}\Big(\frac{2\mathcal{D}}{l}E(0)\Big)^{p-2}\|\nabla u\|_2^2+\frac{b}{4\delta\lambda_1}C_\alpha (h_\alpha\circ \nabla u)(t).
\end{split}
\end{equation}
Recalling the definition of  $g'(t)$ in \eqref{2.5}, and using Cauchy's inequality with $\delta>0$, \eqref{2.4} and H\"older's inequality, one obtains 
\begin{equation}
\label{2.17}
\begin{split}
J_6&=-\int_\Omega\nabla  u_t\int_0^t \alpha g(t-s)\nabla(u(t)-u(s))dsdx\\
&\quad+\int_\Omega\nabla  u_t\int_0^t h_\alpha(t-s)\nabla(u(t)-u(s))dsdx\\
&\le \delta\|\nabla u_t\|_2^2+\frac{\alpha^2}{4\delta}\int_\Omega\Big(\int_0^t g(t-s)\nabla(u(t)-u(s))ds\Big)^2dx\\
&\quad+\delta\|\nabla u_t\|_2^2+\frac{1}{4\delta}\int_\Omega\Big(\int_0^t  h_\alpha(t-s)\nabla(u(t)-u(s))ds\Big)^2dx\\
&\le 2\delta\|\nabla u_t\|_2^2+\frac{\alpha^2}{4\delta}C_\alpha(h_\alpha\circ \nabla u)(t)\\
&\quad+\frac{1}{4\delta}\int_0^t  h_\alpha(s)ds\int_0^t  h_\alpha(t-s)\|\nabla(u(t)-u(s))\|_2^2ds\\
&\le 2\delta\|\nabla u_t\|_2^2+\Big(\frac{\alpha^2}{4\delta}C_\alpha+\frac{\alpha(1-l)+g(0)}{4\delta}\Big)(h_\alpha\circ \nabla u)(t).
\end{split}
\end{equation}
Similarly, we get
\begin{equation}
\label{2.19}
\begin{split}
J_7&=-\int_\Omega\frac{1}{\rho+1}|u_t|^\rho u_t\int_0^t \alpha g(t-s)(u(t)-u(s))dsdx\\
&\quad+\int_\Omega\frac{1}{\rho+1}|u_t|^\rho u_t\int_0^t h_\alpha(t-s)(u(t)-u(s))dsdx\\
&\le \frac{\delta}{\rho+1}\|u_t\|_{2(\rho+1)}^{2(\rho+1)}+\frac{\alpha^2}{4(\rho+1)\delta}\int_\Omega\Big(\int_0^t g(t-s)(u(t)-u(s))ds\Big)^2dx\\
&\quad+\frac{\delta}{\rho+1}\|u_t\|_{2(\rho+1)}^{2(\rho+1)}+\frac{1}{4(\rho+1)\delta}\int_\Omega\Big(\int_0^t  h_\alpha(t-s)(u(t)-u(s))ds\Big)^2dx\\
&\le \frac{2\delta}{\rho+1}\|u_t\|_{2(\rho+1)}^{2(\rho+1)}+\frac{\alpha^2}{4(\rho+1)\delta}C_\alpha(h_\alpha\circ  u)(t)\\
&\quad+\frac{1}{4(\rho+1)\delta}\int_0^t  h_\alpha(s)ds\int_0^t  h_\alpha(t-s)\|u(t)-u(s)\|_2^2ds\\
&\le \frac{2\delta}{\rho+1}c_s^{2(\rho+1)}\Big(2\mathcal{D}E(0)\Big)^\frac{\rho}{2}\|\nabla u_t\|_2^2\\
&\quad+\Big(\frac{\alpha^2}{4(\rho+1)\delta}C_\alpha+\frac{\alpha(1-l)+g(0)}{4(\rho+1)\delta}\Big)\frac{1}{\lambda_1}(h_\alpha\circ  \nabla u)(t).
\end{split}
\end{equation}

Inserting \eqref{2.16}-\eqref{2.19} into \eqref{2.15}, one has 
\begin{equation*}
\begin{split}
I_2'(t)&\le B_1\|\nabla u\|_2^2+B_2(h_\alpha\circ \nabla u)(t)+\Big[B_3-\int_0^tg(s)ds\Big] \|\nabla u_t\|_2^2\\
&\quad+\delta\|z(x,1,t)\|_2^2-\int_0^tg(s)ds\cdot\frac{1}{\rho+1}\|u_t\|_{\rho+2}^{\rho+2},
\end{split}
\end{equation*}
with 
\begin{equation}
\label{b}
\begin{cases}
B_1=\delta +2\delta(1-l)^2+b\delta c_s^{2(p-1)}\Big(\frac{2\mathcal{D}}{l}E(0)\Big)^{p-2};\\
B_2=\Big[\frac{1}{2\delta}+2\delta+\frac{\mu_1^2}{4\delta\lambda_1}+\frac{\mu_2^2}{4\delta\lambda_1}+\frac{b}{4\delta\lambda_1}+\frac{\alpha^2}{4\delta}+\frac{\alpha^2}{4(\rho+1)\delta\lambda_1}\Big]C_\alpha\\
\quad\quad\quad+\frac{\alpha(1-l)+g(0)}{4\delta}+\frac{\alpha(1-l)+g(0)}{4(\rho+1)\delta\lambda_1};\\
B_3=\frac{\delta}{\lambda_1}+2\delta+\frac{2\delta}{\rho+1}c_s^{2(\rho+1)}\Big(2\mathcal{D}E(0)\Big)^\frac{\rho}{2}.
\end{cases}
\end{equation}

\end{proof}

\begin{lemma}\label{lem3}
The functional 
\begin{equation}
\label{8}
I_3(t)=\int_0^1e^{-2\tau\kappa}\|z(x,\kappa,t)\|_2^2d\kappa
\end{equation}
satisfies  for all $t\ge 0$,
\begin{equation}
\label{9}
I_3'(t)=-2I_3(t)+\frac 1\tau\|u_t\|_2^2-\frac{e^{-2\tau}}{\tau}\|z(x,1,t)\|_2^2.
\end{equation}
\end{lemma}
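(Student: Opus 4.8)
The plan is to differentiate $I_3$ directly under the integral sign and then convert the $t$-derivative of $z$ into a $\kappa$-derivative using the transport equation, after which a single integration by parts in $\kappa$ produces all three terms on the right-hand side of \eqref{9}.

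Concretely, first I would write $I_3(t)=\int_0^1\int_\Omega e^{-2\tau\kappa}z^2(x,\kappa,t)\,dx\,d\kappa$ and differentiate to get $I_3'(t)=2\int_0^1\int_\Omega e^{-2\tau\kappa}z(x,\kappa,t)z_t(x,\kappa,t)\,dx\,d\kappa$ (the regularity of $z$ from Theorem~2.1 justifies differentiation under the integral). Next I would invoke the second identity of problem \eqref{2.1}, namely $\tau z_t(x,\kappa,t)+z_\kappa(x,\kappa,t)=0$, to replace $z_t$ by $-\tfrac1\tau z_\kappa$, giving $I_3'(t)=-\tfrac1\tau\int_0^1\int_\Omega e^{-2\tau\kappa}\,\partial_\kappa\!\big(z^2(x,\kappa,t)\big)\,dx\,d\kappa$.

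Then the key step is integration by parts in $\kappa$ on $(0,1)$: since $\partial_\kappa(e^{-2\tau\kappa})=-2\tau e^{-2\tau\kappa}$, one obtains
\[
\int_0^1 e^{-2\tau\kappa}\,\partial_\kappa(z^2)\,d\kappa
= e^{-2\tau}z^2(x,1,t)-z^2(x,0,t)+2\tau\int_0^1 e^{-2\tau\kappa}z^2(x,\kappa,t)\,d\kappa .
\]
Integrating over $\Omega$ and multiplying by $-\tfrac1\tau$ yields $I_3'(t)=-2I_3(t)-\tfrac{e^{-2\tau}}{\tau}\|z(x,1,t)\|_2^2+\tfrac1\tau\|z(x,0,t)\|_2^2$. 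Finally, using the compatibility relation $z(x,0,t)=u_t(x,t)$ from \eqref{2.1} to rewrite $\|z(x,0,t)\|_2^2=\|u_t\|_2^2$ gives exactly \eqref{9}.

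There is essentially no obstacle here: the computation is routine, and the only points requiring a word of justification are the legitimacy of differentiating under the integral (covered by the continuity/regularity of $z$) and the correct bookkeeping of the boundary terms at $\kappa=0$ and $\kappa=1$ in the integration by parts; no hypotheses on $g$, $\mu_i$, or the source term are needed for this lemma.
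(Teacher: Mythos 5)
Your proof is correct and follows essentially the same route as the paper: differentiate under the integral, use the transport equation $\tau z_t+z_\kappa=0$ to trade $z_t$ for $-\tfrac1\tau z_\kappa$, and integrate by parts in $\kappa$ (the paper phrases this as rewriting $2e^{-2\tau\kappa}zz_\kappa$ as a total $\kappa$-derivative plus a correction), finishing with $z(x,0,t)=u_t(x,t)$. No discrepancies.
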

\begin{proof}
Differentiating \eqref{8} on $t$, and using the second identity in \eqref{2.1}, it is direct that 
\begin{equation}
\label{18}
\begin{split}
I_3'(t)&=\int_0^12e^{-2\tau\kappa}\int_\Omega z(x,\kappa,t)z_t(x,\kappa,t)dxd\kappa\\
&=-\int_0^1\frac 2\tau e^{-2\tau\kappa}\int_\Omega z(x,\kappa,t)z_k(x,\kappa,t)dxd\kappa\\
&=-\frac 1\tau\int_\Omega\int_0^1 \Big(\frac{d}{d\kappa}e^{-2\tau\kappa}z^2(x,\kappa,t)+2\tau e^{-2\tau\kappa}z^2(x,\kappa,t)\Big) d\kappa dx\\
&=-2I_3(t)+\frac 1\tau\|u_t\|_2^2-\frac{e^{-2\tau}}{\tau}\|z(x,1,t)\|_2^2.
\end{split}
\end{equation}

\end{proof}

\begin{lemma}[Lemma 3.4 in \cite{M2018}]\label{lem4}
The functional 
\begin{equation}
\label{10}
I_4(t)=\int_0^t f(t-s)\|\nabla u(s)\|_2^2ds
\end{equation}
satisfies for all $t\ge 0$,
\begin{equation}
\label{11}
I_4'(t)\le 3(1-l)\|\nabla u\|_2^2-\frac 12 (g\circ \nabla u)(t),
\end{equation}
where $f(t)=\int_t^\infty g(s)ds.$
\end{lemma}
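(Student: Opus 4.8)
The plan is to differentiate $I_4$ directly and exploit the relation between $f$ and $g$. Applying Leibniz's rule to $I_4(t)=\int_0^t f(t-s)\|\nabla u(s)\|_2^2\,ds$, the boundary term at $s=t$ contributes $f(0)\|\nabla u(t)\|_2^2$ while differentiation under the integral sign produces $\int_0^t f'(t-s)\|\nabla u(s)\|_2^2\,ds$. Since $f(t)=\int_t^\infty g(s)\,ds$, assumption $(\mathbf{H_1})$ gives $f(0)=\int_0^\infty g(s)\,ds=1-l$ and $f'(t)=-g(t)$, whence
\[
I_4'(t)=(1-l)\|\nabla u(t)\|_2^2-\int_0^t g(t-s)\|\nabla u(s)\|_2^2\,ds.
\]

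It then remains to bound the memory integral \emph{from below}, so that $\tfrac12(g\circ\nabla u)(t)$ reappears with a negative sign. I would write $\nabla u(s)=(\nabla u(s)-\nabla u(t))+\nabla u(t)$ and use the elementary inequality $\|\nabla u(s)-\nabla u(t)\|_2^2\le 2\|\nabla u(s)\|_2^2+2\|\nabla u(t)\|_2^2$; multiplying by $g(t-s)$, integrating over $(0,t)$, and using $\int_0^t g(s)\,ds\le\int_0^\infty g(s)\,ds=1-l$ yields
\[
(g\circ\nabla u)(t)\le 2\int_0^t g(t-s)\|\nabla u(s)\|_2^2\,ds+2(1-l)\|\nabla u(t)\|_2^2 .
\]
Rearranging gives $-\int_0^t g(t-s)\|\nabla u(s)\|_2^2\,ds\le (1-l)\|\nabla u(t)\|_2^2-\tfrac12(g\circ\nabla u)(t)$, and inserting this into the formula for $I_4'(t)$ produces $I_4'(t)\le 2(1-l)\|\nabla u(t)\|_2^2-\tfrac12(g\circ\nabla u)(t)$, which in particular implies the stated estimate with the (slightly wasteful) constant $3(1-l)$.

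There is no serious obstacle here; the single point that needs attention is the direction of the estimate — because the coefficient of $\int_0^t g(t-s)\|\nabla u(s)\|_2^2\,ds$ is negative, one must produce a \emph{lower} bound for it, so the inequality $\|a+b\|_2^2\le 2\|a\|_2^2+2\|b\|_2^2$ has to be applied to $\|\nabla u(s)-\nabla u(t)\|_2^2$ rather than to $\|\nabla u(s)\|_2^2$. One should also record that $I_4$ is well defined and differentiable thanks to the regularity $u\in C([0,T);H_0^1(\Omega))$ from Theorem~2.1 and to $f\in C^1([0,\infty))$, which follows from the continuity and integrability of $g$. The extra slack in the constant $3(1-l)$ means no careful bookkeeping of constants is required.
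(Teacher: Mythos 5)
Your proof is correct and follows essentially the same computation as the cited source (the paper itself only quotes this as Lemma 3.4 of \cite{M2018} without reproving it): differentiate via Leibniz's rule to get $I_4'(t)=(1-l)\|\nabla u(t)\|_2^2-\int_0^t g(t-s)\|\nabla u(s)\|_2^2\,ds$, then bound the memory term from below using $\|\nabla u(s)-\nabla u(t)\|_2^2\le 2\|\nabla u(s)\|_2^2+2\|\nabla u(t)\|_2^2$ together with $\int_0^t g\le 1-l$. Your sharper constant $2(1-l)$ indeed implies the stated bound with $3(1-l)$, and your remark about needing a lower (not upper) bound on the convolution term is exactly the right point of care.
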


\section{Stability results}
In this section, we will present and prove  the decay results of the energy functional $E(t)$ based on the lemmas in Section 2. To begin with, we 
define a functional 
\begin{equation}
\label{12}
L(t)=ME(t)+\sum_{i=1}^{3}N_iI_i(t),
\end{equation}
where $M,~N_1,~N_2,~N_3$ are positive constants. The following lemma is shown to illustrate that $L(t)$ is equivalent to $E(t)$.

\begin{lemma}\label{lem5}
 Under all the conditions of Lemma $\ref{lem1}$, assume that $M$ is enough large, then there exist two positive constants $\beta_1$ and $\beta_2$ such that
\begin{equation}
\label{13}
\beta_1E(t) \le L(t) \le \beta_2E(t).
\end{equation}
\end{lemma}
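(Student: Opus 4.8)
The plan is to prove that the correction term $L(t)-ME(t)=\sum_{i=1}^{3}N_iI_i(t)$ is controlled, in absolute value, by a constant (depending on $E(0)$ and on the fixed constants $N_1,N_2,N_3$, but not on $t$) times $E(t)$; then choosing $M$ large enough yields both inequalities in \eqref{13}. The first point to record is that $E(t)\ge 0$ for all $t\ge 0$: the left-hand side of \eqref{5} is a sum of non-negative terms, since $1-\int_0^t g(s)ds\ge l>0$, and $\mathcal D>0$, so \eqref{5} forces $E(t)\ge 0$. Moreover, combining \eqref{5} with the monotonicity $E(t)\le E(0)$ from Lemma \ref{lem2.1} gives the uniform-in-time bounds
\[
\|\nabla u(t)\|_2^2\le\tfrac{2\mathcal D}{l}E(0),\qquad \|\nabla u_t(t)\|_2^2\le 2\mathcal D E(0),\qquad \|u_t(t)\|_{\rho+2}^{\rho+2}\le(\rho+2)\mathcal D E(0),
\]
and each of these quantities is simultaneously bounded by the same expression with $E(t)$ in place of $E(0)$.

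Next I would estimate $|I_i(t)|\le c_iE(t)$ for $i=1,2,3$ with $c_i$ depending only on the data. For $I_3$ this is immediate from \eqref{8} and \eqref{5}: $0\le I_3(t)\le\int_\Omega\int_0^1 z^2(x,\kappa,t)\,d\kappa\,dx\le\frac{2\mathcal D}{\xi}E(t)$. For $I_1$, Young's inequality gives $\frac{1}{\rho+1}\int_\Omega|u_t|^\rho u_tu\,dx\le\frac{1}{\rho+2}\|u_t\|_{\rho+2}^{\rho+2}+\frac{1}{(\rho+1)(\rho+2)}\|u\|_{\rho+2}^{\rho+2}$ and $\int_\Omega\nabla u_t\nabla u\,dx\le\frac12\|\nabla u_t\|_2^2+\frac12\|\nabla u\|_2^2$; the norms $\|u_t\|_{\rho+2}^{\rho+2}$, $\|\nabla u_t\|_2^2$, $\|\nabla u\|_2^2$ are dominated by $E(t)$ via \eqref{5}, while for $\|u\|_{\rho+2}^{\rho+2}$ I would use $(\mathbf{H_2})$ (which gives $\rho+2\le\frac{2N}{N-2}$, hence $H_0^1(\Omega)\hookrightarrow L^{\rho+2}(\Omega)$) and the bound above to write $\|u\|_{\rho+2}^{\rho+2}\le c\|\nabla u\|_2^{\rho}\|\nabla u\|_2^2\le c\big(\tfrac{2\mathcal D}{l}E(0)\big)^{\rho/2}\|\nabla u\|_2^2\le c'E(t)$.

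For $I_2$ I would first integrate by parts in $x$ (the boundary terms vanish because $u(t)-u(s)\in H_0^1(\Omega)$), rewriting
\[
I_2(t)=-\int_\Omega\nabla u_t\cdot\int_0^t g(t-s)(\nabla u(t)-\nabla u(s))\,ds\,dx-\frac{1}{\rho+1}\int_\Omega|u_t|^\rho u_t\int_0^t g(t-s)(u(t)-u(s))\,ds\,dx.
\]
Applying Young's inequality, Cauchy--Schwarz inside the memory integrals (so that $\int_\Omega\big(\int_0^t g(t-s)(\nabla u(t)-\nabla u(s))ds\big)^2dx\le(1-l)(g\circ\nabla u)(t)$, and similarly with $u$ and the factor $\lambda_1^{-1}$ in place of $\nabla u$, using $\lambda_1\|w\|_2^2\le\|\nabla w\|_2^2$), and using $H_0^1(\Omega)\hookrightarrow L^{2(\rho+1)}(\Omega)$ (valid since $\rho\le\frac{2}{N-2}$) together with $\|u_t\|_{2(\rho+1)}^{2(\rho+1)}\le c\|\nabla u_t\|_2^{2\rho}\|\nabla u_t\|_2^2\le c(2\mathcal D E(0))^{\rho}\|\nabla u_t\|_2^2$, one bounds $|I_2(t)|$ by a constant multiple of $\|\nabla u_t\|_2^2+(g\circ\nabla u)(t)$, hence by a constant times $E(t)$ through \eqref{5}.

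Combining the three estimates, $|L(t)-ME(t)|\le\big(\sum_{i=1}^3 N_ic_i\big)E(t)=:C_0E(t)$, so $(M-C_0)E(t)\le L(t)\le(M+C_0)E(t)$, and choosing $M>C_0$ gives \eqref{13} with $\beta_1=M-C_0>0$ and $\beta_2=M+C_0$. The main obstacle is the treatment of the nonlinear terms $|u_t|^\rho u_tu$ and $|u_t|^\rho u_t\cdot(\text{memory term})$: one has to absorb the surplus powers of $\|\nabla u\|_2$ and $\|\nabla u_t\|_2$ into the uniform a priori bounds furnished by \eqref{5}, and it is exactly here that the exponent restrictions in $(\mathbf{H_2})$ are needed; everything else is a routine application of Young's and Poincaré's inequalities.
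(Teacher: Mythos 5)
Your proposal is correct and follows essentially the same route as the paper: bound each $|I_i(t)|$ by a constant multiple of $E(t)$ using \eqref{5}, the non-negativity of $E$, Young/Poincaré and the Sobolev embeddings from $(\mathbf{H_2})$, then absorb the sum into $ME(t)$ by taking $M$ large. The only (immaterial) difference is in the nonlinear term of $I_2$, where you apply Young with exponent $2$ and the $L^{2(\rho+1)}$ embedding, while the paper uses the pair $\big(\tfrac{\rho+2}{\rho+1},\rho+2\big)$ followed by H\"older on the memory integral.
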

\begin{proof}
Recalling the definition of $I_1(t)$ in \eqref{2.6}, using Young's inequality and Cauchy's inequality, and then applying the embedding $H_0^1(\Omega)\hookrightarrow L^{\rho+2}(\Omega)$ and \eqref{5}, it is not hard to give 
\begin{equation}
\label{14}
\begin{split}
|I_1(t)|&\le\frac {1}{\rho+2}\|u_t\|^{\rho+2}_{\rho+2}+ \frac {1}{(\rho+1)(\rho+2)}\|u\|^{\rho+2}_{\rho+2}+\frac 12 \|\nabla u_t\|_2^2+\frac 12 \|\nabla u\|_2^2\\
&\le \frac {1}{\rho+2}\|u_t\|^{\rho+2}_{\rho+2}+ \Big[\frac {c_s^{\rho+2}}{(\rho+1)(\rho+2)}\Big(\frac{2\mathcal{D}}{l}E(0)\Big)^{\frac \rho2}+\frac12\Big] \|\nabla u\|_2^2+\frac 12 \|\nabla u_t\|_2^2.
\end{split}
\end{equation}
Recalling the definition of $I_2(t)$ in \eqref{2.13}, using integration by parts, Cauchy's inequality, Young's inequality and H\"older's inequality,  we give 
\begin{equation}
\label{15}
\begin{split}
|I_2(t)|&\le\frac 12\|\nabla u_t\|_2^2+\frac 12\int_\Omega\Big(\int_0^tg(t-s)(\nabla u(t)-\nabla u(s))ds\Big)^2dx\\
&\quad+\frac {1}{\rho+2}\|u_t\|^{\rho+2}_{\rho+2}+\frac{1}{(\rho+1)(\rho+2)}\int_\Omega\Big(\int_0^tg(t-s)( u(t)-u(s))ds\Big)^{\rho+2}dx\\
&\le \frac 12\|\nabla u_t\|_2^2+ \frac{1-l}{2}(g\circ \nabla u)(t)+\frac {1}{\rho+2}\|u_t\|^{\rho+2}_{\rho+2}\\
&\quad+\frac{1}{(\rho+1)(\rho+2)}(1-l)^{\rho+1}c_s^{\rho+2}\Big(\frac{2\mathcal{D}}{l}E(0)\Big)^{\frac \rho2}(g\circ \nabla u)(t),
\end{split}
\end{equation}
here we have used
\begin{equation}
\label{16}
\begin{split}
&\int_\Omega\Big(\int_0^tg(t-s)( u(t)-u(s))ds\Big)^{\rho+2}dx\\
&\quad\le\int_\Omega\Big(\int_0^t(g(t-s))^{\frac{\rho+1}{\rho+2}}(g(t-s))^{\frac{1}{\rho+2}}( u(t)-u(s))ds\Big)^{\rho+2}dx\\
&\quad\le\Big(\int_0^tg(s)ds\Big)^{\rho+1}\int_0^tg(t-s)\|u(t)-u(s))\|_{\rho+2}^{\rho+2}ds\\
&\quad\le (1-l)^{\rho+1}c_s^{\rho+2}\int_0^tg(t-s)\|\nabla u(t)-\nabla u(s))\|_2^{\rho+2}ds\\
&\quad\le (1-l)^{\rho+1}c_s^{\rho+2}\Big(\frac{2\mathcal{D}}{l}E(0)\Big)^{\frac \rho2}(g\circ \nabla u)(t).
\end{split}
\end{equation}
Therefore, it follows from \eqref{5} that  
\begin{equation*}
\begin{split}
|L(t)-ME(t)|=|\sum_{i=1}^{3}N_iI_i(t)|\le CE(t),
\end{split}
\end{equation*}
here $C$ is some positive constant.
\end{proof}

\begin{lemma}\label{lem6}
Under all the conditions of Lemma $\ref{lem1}$, for $|\mu_2|<\mu_1$, the functional $L(t)$ defined in \eqref{12} satisfies, for $t\ge t_1$
\begin{equation}
\label{17}
\begin{split}
L'(t)&\le -C_1\|u_t\|_{\rho+2}^{\rho+2}-C_2\|\nabla u_t\|_2^2-4(1-l)\|\nabla u\|_2^2\\
&\quad+N_1b\|u\|_p^p+\frac14(g\circ \nabla u)(t)-2N_3\int_0^1\|z(x,\kappa,t)\|_2^2d\kappa.
\end{split}
\end{equation}
where $C_1,~C_2$ are positive constants given in \eqref{c}.
\end{lemma}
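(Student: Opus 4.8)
The plan is to differentiate $L(t) = ME(t) + \sum_{i=1}^3 N_i I_i(t)$ and assemble the estimates already established in Lemmas \ref{lem2.1}, \ref{lem2.3}, \ref{lem2.5} and \ref{lem3}. From Lemma \ref{lem2.1} we have $E'(t) \le -\omega(\|u_t\|_2^2 + \|z(x,1,t)\|_2^2) + \tfrac12(g'\circ\nabla u)(t) - \tfrac12 g(t)\|\nabla u\|_2^2$; from Lemma \ref{lem2.3}, Lemma \ref{lem2.5} and Lemma \ref{lem3} we have the bounds \eqref{2.7}, \eqref{2.14} and \eqref{9} for $I_1'(t)$, $I_2'(t)$, $I_3'(t)$. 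Adding these with coefficients $M, N_1, N_2, N_3$ produces, for $t \ge t_1$ (so that $\int_0^t g(s)\,ds \ge \int_0^{t_1} g(s)\,ds =: g_1 > 0$), an inequality whose right-hand side is a linear combination of the quantities $\|u_t\|_{\rho+2}^{\rho+2}$, $\|\nabla u_t\|_2^2$, $\|\nabla u\|_2^2$, $\|u\|_p^p$, $\|u_t\|_2^2$, $\|z(x,1,t)\|_2^2$, $\int_0^1\|z(x,\kappa,t)\|_2^2\,d\kappa$, $(g\circ\nabla u)(t)$ and $C_\alpha(h_\alpha\circ\nabla u)(t)$.

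The core of the argument is a careful choice of the constants, carried out in the standard hierarchical order. First I would fix $\varepsilon$ small enough (relative to $l$) in \eqref{2.7} so that the coefficient of $\|\nabla u\|_2^2$ coming from $N_1 I_1$ is at most $-\tfrac{l}{2}N_1$, say; then fix $\delta$ small in \eqref{2.14} so that $B_1$ and $B_3$ are small, ensuring the $\|\nabla u\|_2^2$ and $\|\nabla u_t\|_2^2$ contributions of $N_2 I_2$ are dominated. Next I would choose $N_1$, then $N_2$, then $N_3$ so that: the negative term $-\tfrac l2 N_1\|\nabla u\|_2^2$ (together with the help of $-N_2 g_1\|\nabla u_t\|_2^2$ and $-N_2 g_1\tfrac{1}{\rho+1}\|u_t\|_{\rho+2}^{\rho+2}$) absorbs all the positive $\|\nabla u\|_2^2$, $\|\nabla u_t\|_2^2$ and $\|u_t\|_{\rho+2}^{\rho+2}$ terms from $I_1$, $I_2$, $I_3$ and leaves room for the $-4(1-l)\|\nabla u\|_2^2$, $-C_1\|u_t\|_{\rho+2}^{\rho+2}$, $-C_2\|\nabla u_t\|_2^2$ targeted in \eqref{17}; the $-2N_3\int_0^1\|z\|_2^2 d\kappa$ term comes directly from $-2N_3 I_3$ in \eqref{9} while the arising $\tfrac{N_3}{\tau}\|u_t\|_2^2$ and the $\tfrac{\mu_1}{4\varepsilon}N_1\|u_t\|_2^2$, $\tfrac{|\mu_2|}{4\varepsilon}N_1\|z(x,1,t)\|_2^2$, $\delta N_2\|z(x,1,t)\|_2^2$ terms are all absorbed by $-M\omega(\|u_t\|_2^2+\|z(x,1,t)\|_2^2)$ after $M$ is taken last and large. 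Finally, with $M$ large one also keeps $\tfrac{M}{2}(g'\circ\nabla u)(t)$ available; to dominate the $C_\alpha(h_\alpha\circ\nabla u)(t)$ terms (recall $h_\alpha = \alpha g - g'$, so $(h_\alpha\circ\nabla u)(t) = \alpha(g\circ\nabla u)(t) - (g'\circ\nabla u)(t)$) one chooses $\alpha$ small enough that the $\alpha(g\circ\nabla u)(t)$ part fits under the allotted $\tfrac14(g\circ\nabla u)(t)$ and the $-(g'\circ\nabla u)(t)$ part fits under $\tfrac{M}{2}(g'\circ\nabla u)(t)$ — this uses that $C_\alpha$ in \eqref{2.5} stays bounded as $\alpha$ is varied away from $0$, which is why the threshold $t_1$ and Lemma \ref{lem01} are needed to keep $C_\alpha$ finite. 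The term $N_1 b\|u\|_p^p$ is simply kept as is, to be handled later together with $E(t)$.

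The main obstacle is the bookkeeping of the interlocking inequalities: several positive terms ($\|\nabla u\|_2^2$, $\|\nabla u_t\|_2^2$, $\|u_t\|_{\rho+2}^{\rho+2}$, $\|z(x,1,t)\|_2^2$, $\|u_t\|_2^2$) appear with coefficients depending on more than one of the parameters $\varepsilon, \delta, \alpha, N_1, N_2, N_3, M$, so one must verify that there is indeed a consistent ordering in which each choice only tightens constraints already satisfied. Concretely, one needs $N_2 g_1$ to beat $N_1\|\nabla u_t\|_2^2$-coefficient (which is $N_1$ from \eqref{2.7}) and $N_3/\tau$-type contributions, $N_1 l/2$ to beat $N_2 B_1 + (\text{terms from }I_3)$, and then $M\omega$ to beat $N_1\mu_1/(4\varepsilon) + N_2\delta + N_3/\tau$ in the $\|u_t\|_2^2$ slot and $N_1|\mu_2|/(4\varepsilon)+N_2\delta+N_3 e^{-2\tau}/\tau$-type quantities in the $\|z(x,1,t)\|_2^2$ slot — here the strict inequality $|\mu_2| < \mu_1$ is what guarantees $\omega > 0$ so this last absorption is possible. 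Once the ordering $\varepsilon \to \delta \to N_1 \to N_2 \to N_3 \to \alpha \to M$ (with minor interleaving) is pinned down, identifying $C_1$ and $C_2$ as the surviving positive constants gives \eqref{c} and \eqref{17}; I would record these explicit expressions at the end rather than tracking them through every line.
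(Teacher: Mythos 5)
Your plan is the paper's proof: sum $ME'+\sum N_iI_i'$ using \eqref{01}, \eqref{2.7}, \eqref{2.14}, \eqref{9}, use $\int_0^tg\ge g_1>0$ for $t\ge t_1$, and fix $\varepsilon,\delta,N_1,N_2,N_3,\alpha,M$ hierarchically, with $|\mu_2|<\mu_1$ giving $\omega>0$ so that $M\omega$ absorbs the $\|u_t\|_2^2$ and $\|z(x,1,t)\|_2^2$ leftovers. The paper's concrete choices ($N_1=\frac38g_1N_2$, $\varepsilon=\frac{3l}{4}(1+\frac{\mu_1}{\lambda_1}+\frac{|\mu_2|}{\lambda_1})^{-1}$, $N_2=\frac{1}{8\delta(1-l)}$, $\alpha=\frac{1}{2M}$) realize exactly the consistency check you flag as the main obstacle.

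One step of your outline is justified incorrectly, though. To make the $C_\alpha(h_\alpha\circ\nabla u)$ terms harmless you need more than "$C_\alpha$ stays bounded for fixed $\alpha$": after writing $(h_\alpha\circ\nabla u)=\alpha(g\circ\nabla u)-(g'\circ\nabla u)$, the $(g\circ\nabla u)$ contribution carries the coefficient $\alpha C_\alpha$ times quantities ($N_1/(4\varepsilon)$, $N_2/(2\delta)$, \dots) that are already large and fixed, so fitting it under the budget $\frac14(g\circ\nabla u)$ requires the genuinely finer fact $\lim_{\alpha\to0^+}\alpha C_\alpha=0$ (proved in the paper from $\frac{\alpha g^2}{\alpha g-g'}\le g$ and dominated convergence); the crude bound $\alpha C_\alpha\le 1-l$ is not enough. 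Also, $t_1$ and Lemma \ref{lem01} are not what keep $C_\alpha$ finite ($C_\alpha\le(1-l)/\alpha$ automatically from $g'\le0$ and \eqref{02}); in this lemma $t_1$ only serves to give the positive lower bound $g_1=\int_0^{t_1}g(s)\,ds$ on $\int_0^tg$, which is what makes $C_1,C_2>0$ possible, while Lemma \ref{lem01} is used later in Theorem \ref{thm3.3}. With the $\alpha C_\alpha\to0$ fact inserted, your argument closes exactly as in the paper.
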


\begin{proof}
Taking the combination of \eqref{01}, \eqref{2.7} and \eqref{2.14} with \eqref{9},  recalling \eqref{2.5},  and applying $g_1=\int_0^{t_1}g(s)ds\le \int_0^{t}g(s)ds$ for $t\ge t_1$, one has

\begin{equation}
\label{19}
\begin{split}
L'(t)
&\le -M\omega(\|u_t\|_2^2+\|z(x,1,t)\|_2^2)+\frac M2(g'\circ \nabla u)(t)-\frac M2 g(t)\|\nabla u\|_2^2\\
&\quad+\frac{N_1}{\rho+1}\|u_t\|_{\rho+2}^{\rho+2}-N_1\Big[l-\Big(1+\frac{\mu_1}{\lambda_1}+\frac{\mu_2}{\lambda_1}\Big)\varepsilon\Big]\|\nabla u\|_2^2+\frac{N_1}{4\varepsilon}C_\alpha(h_\alpha\circ \nabla u)(t)\\
&\quad+\frac{N_1|\mu_2|}{4\varepsilon}\|z(x,1,t)\|_2^2+N_1b\|u\|_p^p+N_1\|\nabla u_t\|_2^2+N_1\frac{\mu_1}{4\varepsilon}\|u_t\|_2^2\\
&\quad+N_2B_1\|\nabla u\|_2^2+N_2B_2(h_\alpha\circ \nabla u)(t)+N_2\Big[B_3-\int_0^tg(s)ds\Big] \|\nabla u_t\|_2^2\\
&\quad+N_2\delta\|z(x,1,t)\|_2^2-N_2\int_0^tg(s)ds\cdot\frac{1}{\rho+1}\|u_t\|_{\rho+2}^{\rho+2}\\
&\quad-2N_3I_3(t)+\frac {N_3}{\tau}\|u_t\|_2^2-\frac{N_3e^{-2\tau}}{\tau}\|z(x,1,t)\|_2^2\\
&\le -C_1\|u_t\|_{\rho+2}^{\rho+2}-C_2\|\nabla u_t\|_2^2-C_3\|\nabla u\|_2^2-C_4\|z(x,1,t)\|_2^2-C_5(h_\alpha\circ \nabla u)(t)\\
&\quad+N_1b\|u\|_p^p+\frac {\alpha M}{2}(g\circ \nabla u)(t)-C_6\|u_t\|_2^2-2N_3\int_0^1\|z(x,\kappa,t)\|_2^2d\kappa
\end{split}
\end{equation}
with 
\begin{equation}
\label{c}
\begin{cases}
C_1=N_2g_1\cdot\frac{1}{\rho+1}-\frac{N_1}{\rho+1};\\
C_2=N_2\Big[g_1-\Big(\frac{\delta}{\lambda_1}+2\delta+\frac{2\delta}{\rho+1}c_s^{2(\rho+1)}\Big(2\mathcal{D}E(0)\Big)^\frac{\rho}{2}\Big)\Big]-N_1;\\
C_3=N_1\Big[l-\Big(1+\frac{\mu_1}{\lambda_1}+\frac{|\mu_2|}{\lambda_1}\Big)\varepsilon\Big]-N_2\Big[\delta +2\delta (1-l)^2+b\delta c_s^{2(p-1)}\Big(\frac{2\mathcal{D}}{l}E(0)\Big)^{p-2}\Big];\\
C_4=\omega M+N_3\frac{e^{-2\tau}}{\tau}-N_1\frac{|\mu_2|}{4\varepsilon}-N_2\delta;\\
C_5=\frac M2-N_1\frac{1}{4\varepsilon}C_\alpha-N_2\Big[\Big(\frac{1}{2\delta}+2\delta+\frac{\mu_1^2}{4\delta\lambda_1}+\frac{\mu_2^2}{4\delta\lambda_1}+\frac{b}{4\delta\lambda_1}+\frac{\alpha^2}{4\delta}+\frac{\alpha^2}{4(\rho+1)\delta\lambda_1}\Big)C_\alpha;\\
\quad\quad\quad+\frac{\alpha(1-l)+g(0)}{4\delta}+\frac{\alpha(1-l)+g(0)}{4(\rho+1)\delta\lambda_1}\Big];\\
C_6=\omega M-\frac{N_3}{\tau}-N_1\frac{\mu_1}{4\varepsilon},
\end{cases}
\end{equation}
where we have used the values of $B_1$, $B_2$ and $B_3$ defined in \eqref{b}.

Next, we choose $\delta$ such that 
\begin{equation*}
\begin{split}
\delta&<\Bigg\{\frac{lg_1}{16\Big[1 +2 (1-l)^2+b c_s^{2(p-1)}\Big(\frac{2\mathcal{D}}{l}E(0)\Big)^{p-2}\Big]},\frac{lg_1}{1024(1-l)^2},\\
&\quad\quad\frac{5g_1}{8\Big(\frac{1}{\lambda_1}+2+\frac{2}{\rho+1}c_s^{2(\rho+1)}\Big(2\mathcal{D}E(0)\Big)^\frac{\rho}{2}\Big)}\Bigg\}.
\end{split}
\end{equation*}
Let us choose $N_1=\frac 38g_1N_2$, then 
\[C_1=N_2g_1\cdot\frac{1}{\rho+1}-\frac 38g_1N_2\frac{1}{\rho+1}=\frac 58g_1N_2\frac{1}{\rho+1}>0,\quad C_2>0.\]
Let us fix 
\[\varepsilon=\frac{3l}{4}\frac{1}{1+\frac{\mu_1}{\lambda_1}+\frac{|\mu_2|}{\lambda_1}},\]
then
\[C_3=\frac{N_1l}{4}-N_2\Big[\delta +2\delta (1-l)^2+b\delta c_s^{2(p-1)}\Big(\frac{2\mathcal{D}}{l}E(0)\Big)^{p-2}\Big]>\frac{l}{32}g_1N_2>0.\]
By taking $N_2=\frac{1}{8\delta(1-l)}$, we get
\[C_3>\frac{l}{32}g_1N_2=\frac{lg_1}{256\delta(1-l)}>4(1-l).\]

Since $g'(s)\le 0$, one has $\frac{\alpha g^2(s)}{\alpha g(s)-g'(s)}\le g(s)$, further we get 
\[\lim_{\alpha\to 0^+}\alpha C_\alpha=\lim_{\alpha\to 0^+}\int_0^\infty \frac{\alpha g^2(s)}{\alpha g(s)-g'(s)} ds=0.\]
Thus, there exists $0<\alpha_0<1$ so that if $\alpha<\alpha_0$, then 
\[\alpha C_\alpha<\frac{1}{8\Big[N_2\Big(\frac{1}{2\delta}+2\delta+\frac{\mu_1^2}{4\delta\lambda_1}+\frac{\mu_2^2}{4\delta\lambda_1}+\frac{b}{4\delta\lambda_1}+\frac{\alpha^2}{4\delta}+\frac{\alpha^2}{4(\rho+1)\delta\lambda_1}\Big)+N_1\frac{1}{4\varepsilon}\Big]}.\]
Let us choose $M$ sufficiently large such that for $\alpha=\frac{1}{2M}$,
\[C_5=\frac M4-N_2\Big[\frac{\alpha(1-l)+g(0)}{4\delta}+\frac{\alpha(1-l)+g(0)}{4(\rho+1)\delta\lambda_1}\Big]>0,\]
\[C_4=\omega M+N_3\frac{e^{-2\tau}}{\tau}-N_1\frac{|\mu_2|}{4\varepsilon}-N_2\delta>0,\]
\[C_6=\omega M-\frac{N_3}{\tau}-N_1\frac{\mu_1}{4\varepsilon}>0,\]
here we have used $\omega>0$ given in Lemma \ref{lem2.1} since $|\mu_2|<\mu_1$.  

Based on the above discussion, one has from \eqref{19}
\begin{equation*}
\begin{split}
L'(t)&\le -C_1\|u_t\|_{\rho+2}^{\rho+2}-C_2\|\nabla u_t\|_2^2-4(1-l)\|\nabla u\|_2^2\\
&\quad+N_1b\|u\|_p^p+\frac 14(g\circ \nabla u)(t)-2N_3\int_0^1\|z(x,\kappa,t)\|_2^2d\kappa.
\end{split}
\end{equation*}

\end{proof}

Now, we give the following stability results.
\begin{theorem}\label{thm3.3}
For $|\mu_2|<\mu_1$, suppose that $(\mathbf{H_1}),~(\mathbf{H_2})$ hold and \[E(0)<E_1,\quad l\|\nabla u_0\|_2^2<\sigma_1^2,\] then there exist positive constants $k_1, k_2, k_3$ and $k_4$ such
that the solution of problem \eqref{1111} satisfies for all $t\ge t_1$, 
\begin{equation*}
E(t)\le 
\begin{cases}k_1e^{-k_2\int_{t_1}^t\zeta (s)ds}&\quad \text{for }G\text{ is linear};\\
 k_4G_1^{-1}\Big(k_3\int_{t_1}^t\zeta (s)ds\Big)&\quad \text{for }G\text{ is nonlinear},
\end{cases}
\end{equation*}
where $E_1$ and $\sigma_1$ are shown in Lemma $\ref{lem1}$, $G_1(t)=\int_t^r\frac{1}{sG'(s)}ds$ is strictly decreasing and convex in $(0,r]$ with $\lim_{t\to 0}G_1(t)=+\infty.$
\end{theorem}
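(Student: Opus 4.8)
The plan is to exploit the equivalence $L(t)\sim E(t)$ from Lemma~\ref{lem5} together with the differential inequality for $L'(t)$ in Lemma~\ref{lem6}, and to convert the latter into a closed inequality for $E(t)$ alone. First I would observe that by Lemma~\ref{lem2} we have $N_1 b\|u\|_p^p\le N_1 b\mathcal{D}E(t)$, while the terms $\|u_t\|_{\rho+2}^{\rho+2}$, $\|\nabla u_t\|_2^2$, $(1-l)\|\nabla u\|_2^2$ and $\int_0^1\|z\|_2^2 d\kappa$ together dominate (a multiple of) $E(t)$ minus the memory term, because of the defining formula \eqref{2.2} for $E(t)$ and the factor $4(1-l)$ in front of $\|\nabla u\|_2^2$ (which was arranged precisely so as to absorb $3(1-l)\|\nabla u\|_2^2$ coming from $I_4$). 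Introducing the modified functional $\mathcal{L}(t)=L(t)+N_4 I_4(t)$ for a suitable $N_4>0$, and using Lemma~\ref{lem4} to handle $I_4'(t)$, one gets, for $t\ge t_1$,
\[
\mathcal{L}'(t)\le -m E(t)+C(g\circ\nabla u)(t)
\]
for some constants $m,C>0$, where the surviving negative $-\tfrac14(g\circ\nabla u)(t)$ (or a fraction thereof) has been spent. Note $\mathcal{L}(t)\sim E(t)$ still holds since $I_4(t)\le (1-l)\|\nabla u\|_2^2\lesssim E(t)$.

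The heart of the matter is then to estimate $(g\circ\nabla u)(t)$ using hypothesis $(\mathbf{H_1})$ and convexity, following Mustafa's scheme. I would split $(g\circ\nabla u)(t)=\int_0^{t_1}+\int_{t_1}^t$. On $[0,t_1]$ Lemma~\ref{lem01} gives $g'(s)\le -\gamma g(s)$, so $\int_0^{t_1}g(t-s)\|\nabla u(t)-\nabla u(s)\|_2^2 ds\le -\tfrac1\gamma\int_0^{t_1}g'(t-s)\|\cdots\|_2^2 ds\le -\tfrac{c}{\gamma}E'(t)$ via \eqref{01}. For the tail $\int_{t_1}^t$, I would use \eqref{add02}, i.e. $g'(t)\le -\zeta(t)G(g(t))$, and Jensen's inequality with the (concave) inverse or with $G$ itself: writing $\eta(t)=\int_{t_1}^t\|\nabla u(t)-\nabla u(s)\|_2^2 ds$, which is bounded (by Lemma~\ref{lem2} / \eqref{5}), one obtains that $\zeta(t)\,G^{-1}$-type bounds control $\int_{t_1}^t g(t-s)\|\cdots\|_2^2 ds$ in terms of $\tfrac1{\zeta(t)}$ times a function of $-E'(t)$. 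This produces the key inequality
\[
\zeta(t)\,G'(\varepsilon_0 q(t))\,q(t)\le -c\,E'(t)+c\,\zeta(t)G'(\varepsilon_0 q(t))E(t)^{?}
\]
schematically, where $q(t)=(g\circ\nabla u)(t)$; the precise form is the standard one from \cite{M2018}.

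Combining the two parts, one reaches a differential inequality of the form
\[
\mathcal{F}'(t)\le -k\,\zeta(t)\,G_0\!\left(\mathcal{F}(t)\right),\qquad t\ge t_1,
\]
where $\mathcal{F}(t)$ is $\mathcal{L}(t)$ corrected by a bounded nonnegative term and $G_0(s)=sG'(\varepsilon_0 s)$, after checking $\mathcal{F}\sim E$. When $G$ is linear, $G_0(s)=cs$ and Gronwall gives the exponential bound $E(t)\le k_1 e^{-k_2\int_{t_1}^t\zeta}$. When $G$ is nonlinear, one integrates $\mathcal{F}'/G_0(\mathcal{F})\le -k\zeta$, recognizes $\int \frac{ds}{sG'(\varepsilon_0 s)}$ as (a rescaling of) $G_1$, and inverts using that $G_1$ is strictly decreasing and convex with $\lim_{t\to0}G_1(t)=+\infty$, yielding $E(t)\le k_4 G_1^{-1}\big(k_3\int_{t_1}^t\zeta\big)$.

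The main obstacle I anticipate is the tail estimate of $(g\circ\nabla u)(t)$: making the Jensen/convexity argument rigorous requires choosing the averaging measure $g(t-s)\,ds$ correctly, normalizing by $\int_{t_1}^t g(t-s)ds$ (which must be bounded below away from zero for large $t$, or else handled by an extra argument), and controlling the resulting $G^{-1}$ in terms of $E'(t)$ while keeping the $E(t)$-coefficient small enough to be absorbed into $-mE(t)$ — this absorption step is delicate and typically forces the introduction of $G_0(s)=sG'(\varepsilon_0 s)$ rather than $G$ itself, plus a smallness choice of $\varepsilon_0$. The presence of $\Delta u_{tt}$, the delay variable $z$ and the source $b|u|^{p-2}u$ only affects the bookkeeping (they are already absorbed into $L(t)$ in Lemma~\ref{lem6} and controlled by Lemma~\ref{lem2}), so the convexity machinery itself is unchanged from \cite{M2018,0CB2021}.
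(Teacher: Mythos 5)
Your plan is correct and follows essentially the same route as the paper: split $(g\circ\nabla u)$ at $t_1$, absorb the $[0,t_1]$ piece via Lemma \ref{lem01} and $E'(t)$, treat the tail by Jensen's inequality with a normalized measure, and close the estimate with the convex-conjugate trick leading to $G_2(t)=tG'(r_1t)$ and the $G_1^{-1}$ bound. The only organizational difference is that the paper keeps $I_4$ out of the main functional (using $F=L+cE$ for the decay chain and $H=L+I_4$ only to get $\int_0^\infty E(s)\,ds<\infty$, which justifies the normalization $\lambda(t)<1$ in Jensen), whereas you fold $I_4$ into $\mathcal{L}$ from the start; both work.
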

\begin{proof}
Using \eqref{05} and \eqref{01}, one has, for $t\ge t_1$,
\begin{equation}
\label{06}
\begin{split}
&\int_0^{t_1}g(s)\|\nabla u(t)-\nabla u(t-s)\|_2^2ds\\
&\quad\le-\frac{1}{\gamma}\int_0^{t_1}g'(s)\|\nabla u(t)-\nabla u(t-s)\|_2^2ds\le -cE'(t)
\end{split}
\end{equation}
here  $c$ is used to denote a generic positive constant throughout this proof.  Define a functional $F(t)$ that is obviously equivalent to $E(t)$ as follows 
\[F(t)=L(t)+cE(t),\]
then based on \eqref{17}, \eqref{2.2}, \eqref{06}, for  some $m>0$ and for any $t\ge t_1$, we have 
\begin{equation}
\label{07}
\begin{split}
F'(t)&\le -C_1\|u_t\|_{\rho+2}^{\rho+2}-C_2\|\nabla u_t\|_2^2-4(1-l)\|\nabla u\|_2^2\\
&\quad+N_1b\|u\|_p^p+\frac 14(g\circ \nabla u)(t)-2N_3\int_0^1\|z(x,\kappa,t)\|_2^2d\kappa+cE'(t)\\
&\le -mE(t)-\Big(\frac{bc}{p}-N_1b\Big)\|u\|_p^p+c(g\circ \nabla u)(t)+cE'(t)\\
&\le -mE(t)+c\int_{t_1}^{t}g(s)\|\nabla u(t)-\nabla u(t-s)\|_2^2ds,
\end{split}
\end{equation}
here we have chosen $N_1$ so small  that $\frac{bc}{p}-N_1b>0$.

In what follows, we will discuss in two cases. 

\textsc{Case 1: $G$ is linear} Multiplying \eqref{07} by $\zeta(t)$, using $(\mathbf{H_1})$ and \eqref{01}, one gives
\begin{equation*}
\begin{split}
\zeta(t) F'(t)&\le -m\zeta(t) E(t)+c\zeta(t)\int_{t_1}^{t} g(s)\|\nabla u(t)-\nabla u(t-s)\|_2^2ds\\
&\le -m\zeta(t) E(t)-c\int_{t_1}^{t}g'(s)\|\nabla u(t)-\nabla u(t-s)\|_2^2ds\\
&\le -m\zeta(t) E(t)-cE'(t),
\end{split}
\end{equation*}
which implies
\[(\zeta(t) F(t)+cE(t))'\le -m\zeta(t) E(t)\quad \text{for }t\ge t_1.\]
Integrating the above inequality over $(t_1,t)$, and using the fact that $\zeta (t)F(t)+cE(t)$ is equivalent to $E(t)$, one has
\[E(t)\le k_1e^{-k_2\int_{t_1}^t\zeta (s)ds}\quad \text{for }t\ge t_1,\]
here $k_1$ and $k_2$ are constants.

\textsc{Case 2: $G$ is nonlinear} Define a functional
\[H(t)=L(t)+I_4(t).\]
Taking the combination of Lemma \ref{lem5} and the non-negativity of $E(t)$ obtained by Lemma \ref{lem2} with the definition of $I_4(t)$ in \eqref{10}
, it is not difficult to get the non-negativity of $H(t)$. It follows from \eqref{17} and \eqref{11} that for some $m_1>0$ and $t\ge t_1$,
\begin{equation*}
\begin{split}
H'(t)&=L'(t)+I'_4(t)\\
&\le-C_1\|u_t\|_{\rho+2}^{\rho+2}-C_2\|\nabla u_t\|_2^2-(1-l)\|\nabla u\|_2^2\\
&\quad+N_1b\|u\|_p^p-\frac 14(g\circ \nabla u)(t)-2N_3\int_0^1\|z(x,\kappa,t)\|_2^2d\kappa\le -m_1E(t).
\end{split}
\end{equation*}
Integrating the above inequality over $(t_1,t)$ yields
\[m_1\int_{t_1}^tE(s)ds\le H(t_1)-H(t)\le H(t_1),\]
which implies 
\begin{equation}
\label{08}
\int_0^\infty E(s)ds<+\infty.
\end{equation}

Define 
\[\lambda (t)=p\int_{t_1}^t\|\nabla u(t)-\nabla u(t-s)\|_2^2ds,\]
by using \eqref{5}, then we give
\begin{equation*}
\begin{split}
\lambda (t)&\le 2p\int_0^t\Big(\|\nabla u(t)\|_2^2+\|\nabla u(t-s)\|_2^2\Big)ds\\
&\le \frac{8p\mathcal{D}}{l}\int_0^t\Big(E(t)+E(t-s)\Big)ds\le \frac{16p\mathcal{D}}{l}\int_0^tE(t-s)ds\\
&=\frac{16p\mathcal{D}}{l}\int_0^tE(s)ds\le\int_0^\infty E(s)ds<+\infty.
\end{split}
\end{equation*}
Thus, we can choose $p$ so small  that  for $t\ge t_1$,
\begin{equation}
\label{08}
\lambda(t)<1.
\end{equation}
It is direct that 
\begin{equation}
\label{09}
G(\theta z) \le \theta G(z)\quad \text{for }0 \le \theta \le1\text{ and } z\in (0, r],
\end{equation}
since $G$ is strictly convex on $(0, r]$ and $G(0) = 0$. Based on \eqref{add02}, \eqref{08}, \eqref{09} and Jensen's inequality, one gives
\begin{equation*}
\begin{split}
I(t)&= \frac{1}{p\lambda (t)}\int_{t_1}^t\lambda (t)(-g'(s))p\|\nabla u(t)-\nabla u(t-s)\|_2^2ds\\
&\ge \frac{1}{p\lambda (t)}\int_{t_1}^t\lambda (t)\zeta(s)G(g(s))p\|\nabla u(t)-\nabla u(t-s)\|_2^2ds\\
&\ge \frac{\zeta(t)}{p\lambda (t)}\int_{t_1}^t\overline{G}(\lambda (t)g(s))p\|\nabla u(t)-\nabla u(t-s)\|_2^2ds\\
&\ge \frac{\zeta(t)}{p}\overline{G}\Big(p\int_{t_1}^tg(s)\|\nabla u(t)-\nabla u(t-s)\|_2^2ds\Big),
\end{split}
\end{equation*}
which yields
\begin{equation}
\label{010}
\int_{t_1}^tg(s)\|\nabla u(t)-\nabla u(t-s)\|_2^2ds\le\frac{1}{p}\overline{G}^{-1}\Big(\frac{pI(t)}{\zeta(t)}\Big),
\end{equation}
where $G$ has an extension $\overline{G}$ which is a strictly increasing and strictly convex $C^2$ function on $(0, +\infty)$  as in Remark 2.1 \cite{CB2021}. 
Therefore, \eqref{07} becomes 
\begin{equation}
\label{011}
F'(t)\le-mE(t)+\frac{c}{p}\overline{G}^{-1}\Big(\frac{pI(t)}{\zeta(t)}\Big).
\end{equation}
Let us define the functional 
\[F_1(t)=\overline{G}'\Big(\frac{r_1E(t)}{E(0)}\Big)F(t)+E(t)\]
with $0<r_1<r,$ then $F_1$ is equivalent to $E$ and 
\begin{equation}
\label{012}
\begin{split}
F_1'(t)&=\frac{r_1E'(t)}{E(0)}\overline{G}''\Big(\frac{r_1E(t)}{E(0)}\Big)F(t)+\overline{G}'\Big(\frac{r_1E(t)}{E(0)}\Big)F'(t)+E'(t)\\
&\le -mE(t)\overline{G}'\Big(\frac{r_1E(t)}{E(0)}\Big)+\frac{c}{p}\overline{G}^{-1}\Big(\frac{pI(t)}{\zeta(t)}\Big)\overline{G}'\Big(\frac{r_1E(t)}{E(0)}\Big)+E'(t)
\end{split}
\end{equation}
by using \eqref{011}, \eqref{01}, $G'>0$ and $G''>0$. Let $\overline{G}^*$ be the convex conjugate of $G$ in the sense of Young in \cite{A1989}, which is given by
\begin{equation}
\label{013}
\overline{G}^*(s) = s(\overline{G}')^{-1}(s)-\overline{G}
\Big[(\overline{G}')^{-1}(s)\Big]
\end{equation}
and it satisfies the following Young's inequality
\begin{equation}
\label{014}
AB\le\overline{G}^*(A) + \overline{G}(B).
\end{equation}
Choosing 
\[A=\overline{G}'\Big(\frac{r_1E(t)}{E(0)}\Big)\text{ and }B=\overline{G}^{-1}\Big(\frac{pI(t)}{\zeta(t)}\Big),\]
then  using \eqref{014}, \eqref{013}  and the non-negativity of $\overline{G}, $ \eqref{012} becomes
\begin{equation}
\label{015}
\begin{split}
F_1'(t)&\le -mE(t)\overline{G}'\Big(\frac{r_1E(t)}{E(0)}\Big)+\frac{c}{p}\overline{G}^{-1}\Big(\frac{pI(t)}{\zeta(t)}\Big)\overline{G}'\Big(\frac{r_1E(t)}{E(0)}\Big)+E'(t)\\
&\le -mE(t)\overline{G}'\Big(\frac{r_1E(t)}{E(0)}\Big)+\frac{c}{p}\overline{G}^*\Big(\overline{G}'\Big(\frac{r_1E(t)}{E(0)}\Big)\Big)+c\frac{I(t)}{\zeta(t)}+E'(t)\\
&\le -mE(t)\overline{G}'\Big(\frac{r_1E(t)}{E(0)}\Big)+\frac{c}{p}\frac{r_1E(t)}{E(0)}\overline{G}'\Big(\frac{r_1E(t)}{E(0)}\Big)\\
&\quad-\overline{G}\Big(\frac{r_1E(t)}{E(0)}\Big)+c\frac{I(t)}{\zeta(t)}+E'(t)\\
&\le -mE(t)\overline{G}'\Big(\frac{r_1E(t)}{E(0)}\Big)+\frac{c}{p}\frac{r_1E(t)}{E(0)}\overline{G}'\Big(\frac{r_1E(t)}{E(0)}\Big)+c\frac{I(t)}{\zeta(t)}+E'(t).
\end{split}
\end{equation}
Note that  $\eqref{01}$ implies
\begin{equation*}
I(t)\le\int_{t_1}^t-g'(s)\|\nabla u(t)-\nabla u(t-s)\|_2^2ds \le -2E'(t),
\end{equation*}
then one has\begin{equation}
\zeta (t)F_1'(t)\le -m\zeta (t)E(t)G'\Big(\frac{r_1E(t)}{E(0)}\Big)+\frac{c}{p}\zeta(t)\frac{r_1E(t)}{E(0)}G'\Big(\frac{r_1E(t)}{E(0)}\Big)-cE'(t)
\end{equation}
 by multiplying \eqref{015} by $\zeta (t)$ and by using the fact \[\overline{G}'\Big(\frac{r_1E(t)}{E(0)}\Big)=G'\Big(\frac{r_1E(t)}{E(0)}\Big).\]

Define the functional $F_2(t)=\zeta (t)F_1(t)+cE(t)$ which  is equivalent to $E(t)$, which means 
\begin{equation}
\label{016}
\gamma_1 F_2(t)\le E(t)\le \gamma_2 F_2(t)
\end{equation}
for some $\gamma_1$ and $\gamma_2$. Under a suitable choice of $r_1$ and for a positive constant $k$, we have
\begin{equation}
\label{017}
F_2'(t)\le -k\zeta(t)\frac{E(t)}{E(0)}G'\Big(\frac{r_1E(t)}{E(0)}\Big)=-k\zeta(t)G_2\Big(\frac{E(t)}{E(0)}\Big)
\end{equation}
with $G_2(t)=tG'(r_1t)$. Obviously, $G_2$ and $G_2'$ are positive in $(0,1]$ since \[G_2'(t)=G'(r_1t)+r_1t^2G''(r_1t)\] and the convexity of $G$ in $(0,r].$ 
\eqref{017} and \eqref{016} imply
\begin{equation}
\label{018}
\Big(\frac{\gamma_1F_2(t)}{E(0)}\Big)'\le -k\zeta(t)\frac{\gamma_1}{E(0)}G_2\Big(\frac{E(t)}{E(0)}\Big)\le -k_3\zeta(t)G_2\Big(\frac{\gamma_1F_2(t)}{E(0)}\Big)
\end{equation}
with $k_3=k\frac{\gamma_1}{E(0)}$. Setting $R(t)=\frac{\gamma_1F_2(t)}{E(0)}$, and then integrating \eqref{018} over $(t_1,t)$, one has 
\[\int_{t_1}^t-\frac{R'(s)}{G_2(R(s))}ds\ge \int_{t_1}^tk_3\zeta(s)ds.\]
Since $r_1R(t_1)<r$, we have
\[G_1(r_1R(t))=\int_{r_1R(t)}^{r_1R(t_1)}\frac{1}{sG'(s)}ds\ge k_3\int_{t_1}^t\zeta (s)ds.\]
It is noted that $G_1$ is strictly decreasing function on $(0, r]$ and $\lim_{t\to 0}G_1(t)=+\infty$ in Theorem \ref{thm3.3}, then
\[R(t) \le \frac{1}{r_1}G_1^{-1}\Big(k_3\int_{t_1}^t\zeta (s)ds\Big).\]
Since  $R(t)$ is equivalent to $E(t)$, further one obtains
\[E(t) \le k_4G_1^{-1}\Big(k_3\int_{t_1}^t\zeta (s)ds\Big)\]
with $k_4=\frac{1}{r_1}.$

This completes the proof of this theorem.
\end{proof}

\section{Possible generalizations}
Our results can be generalized to the following initial boundary value problem with the time-varying delay $\tau(t)$
\begin{equation*}
\begin{cases}
      |u_t|^\rho u_{tt}-\Delta u-\Delta u_{tt}+\int_0^tg(t-s)\Delta u(s)ds\\
      \quad+\mu_1u_t(x,t)+\mu_2 u_t(x,t-\tau(t))=b|u|^{p-2}u& (x,t)\in\Omega \times(0,\infty),  \\
      u_t(x,t-\tau(0))=f_0(x,t-\tau(0))&(x,t)\in\Omega\times(0,\tau(0)),  \\
      u(x,0)=u_0(x),~u_t(x,0)=u_1(x)&x\in\Omega,\\
      u(x,t)=0&(x,t)\in\partial\Omega\times[0,\infty),
\end{cases}
\end{equation*}
equipped with the following assumptions in addition to $(\mathbf{H_1})$ and $(\mathbf{H_2})$: 

$(A_1)$ the function $\tau\in W^{2,\infty}([0,T])$, for all $T>0$ such that 
\[0<\tau_0\le \tau(t)\le\tau_1\quad\text{for all }t>0,\]
\[\tau'(t)\le d<1\quad\text{for all }t>0,\]
where $\tau_0$ and $\tau_1$ are positive numbers.

$(A_2)$ the coefficients of delay and dissipation satisfy
\[|\mu_2|\le \frac{2(1-d)}{2-d}\mu_1.\]
The above assumptions in fact  are given in \cite{CB2021}. To be more precise, we need to define the new energy functional 
\begin{equation*}
\begin{split}
E(t)&=\frac{1}{\rho+2}\|u_t\|_{\rho+2}^{\rho+2}+\frac12\Big(1-\int_0^tg(s)ds\Big)\|\nabla u\|_2^2+\frac12(g\circ\nabla u)(t)\\
&\quad+\frac12\|\nabla u_t\|_2^2+\frac{\xi}{2}\tau(t)\int_\Omega\int_0^1z^2(x,\kappa,t)d\kappa dx-\frac bp\|u\|_p^p,
\end{split}
\end{equation*}
where $\xi$ satisfies 
\[\frac{|\mu_2|}{1-d}\le \xi\le 2\mu_1-|\mu_2|.\]
We also need to replace $I_3(t)$ in Lemma \ref{lem3} by 
\[I_3(t)=\int_0^1e^{-2\tau(t)\kappa}\|z(x,\kappa,t)\|_2^2d\kappa,\]
further,  one has
\[I_3'(t)\le -2I_3(t)-\frac{(1-d)e^{-2\tau_1}}{\tau_1}\|z(x,1,t)\|_2^2+\frac{1}{\tau_0}\|u_t\|_2^2\]
as shown in Lemma 2.8 of \cite{CB2021}. Based on the above changes, we believe that it is possible to get the similar results in Theorem \ref{thm3.3} by following the steps in this paper.

In addition, we have a conjecture that our results can also be extended to the  following initial boundary value problem:
\begin{equation*}
\label{1.1}
\begin{cases}
      |u_t|^\rho u_{tt}-\Delta u-\Delta u_{tt}+\int_0^tg(t-s)\Delta u(s)ds\\
      \quad+\mu_1|u_t(x,t)|^{m-2}u_t(x,t)\\
      \quad+\mu_2 |u_t(x,t-\tau)|^{m-2}u_t(x,t-\tau)=b|u|^{p-2}u& (x,t)\in\Omega \times(0,\infty),  \\
      u_t(x,t-\tau)=f_0(x,t-\tau)&(x,t)\in\Omega\times(0,\tau),  \\
      u(x,0)=u_0(x),~u_t(x,0)=u_1(x)&x\in\Omega,\\
      u(x,t)=0&(x,t)\in\partial\Omega\times[0,\infty).
\end{cases}
\end{equation*}

\subsection*{Acknowledgements}
The author would like  to express her sincere gratitude to Professor Wenjie Gao and  Professor Bin Guo in Jilin University  for their support and constant encouragement.

\end{document}